\documentclass[letterpaper,11pt]{amsart}

\usepackage{amsfonts}
\usepackage{amsmath}
\usepackage{amssymb}
\usepackage{amsthm}

\topmargin=0.0 in
\textheight=9.0 in
\textwidth=5.5 in
\oddsidemargin=0.5 in
\evensidemargin=0.5 in
\pagestyle{headings}
%\author{N. Dehghani  and  S. T. Rizvi, January 3, 2019 }
%\title{When mutually epimorphic modules are isomorphic}
%\address{ Najmeh Dehghani, Faculty of Sciences, Persian Gulf University, Booshehr, Iran, 751-6913817}
%\email{e-mail:n.dehghani@pgu.ac.ir}
%\address{Syed  Tariq Rizvi, Department of Mathematics, The Ohio State University, Lima, Oh 45804,  USA }
%\email{ e-mail: rizvi.1@osu.edu}

%    The 2010 edition of the Mathematics Subject Classification is
%    now available.  If you are citing a classification from the
%    new scheme, use the following input coding instead.
%\subjclass[2010]{Primary }

\newtheorem{thm}{Theorem}[section]

\newtheorem{cor}[thm]{Corollary}
\newtheorem{lem}[thm]{Lemma}
\newtheorem{prop}[thm]{Proposition}

\theoremstyle{definition}

\newtheorem{exam}[thm]{Example}

\numberwithin{equation}{section}

\begin{document}
\author[Najmeh Dehghani]{Najmeh Dehghani}
\address{Department of Mathematics, College of Sciences, Persian Gulf University, Bushehr, Iran, 751-6913817.}
\email{n.dehghani@pgu.ac.ir}
\author{S.  Tariq  Rizvi}
\address{Department of Mathematics, The Ohio State University, Lima, Oh 45804,  USA.}
\email{rizvi.1@osu.edu}
%\thanks{$^{^*}$Corresponding author}

\keywords{Baer,  direct summand, dual Baer, dual Rickart, extending, Rickart, subisomorphism. }
\subjclass[2010]{Primary: 16D40, 16D50, 16P20; Secondary: 16E50, 16D80.}
\title{When mutually subisomorphic  Baer modules are isomorphic}

\begin{abstract} The Schr\"{o}der-Bernstein Theorem for sets is well known. The question of whether two subisomorphic algebraic structures are isomorphic to each other, is of interest.
An $R$-module $M$  is said to satisfy  the Schr\"{o}der-Bernstein (or SB) property if any  pair of direct summands of $M$ are isomorphic provided that each one is isomorphic to a direct summand of the other.
A ring $R$ (with an  involution $\star$) is called  a Baer (Baer $\star$-)ring if the right annihilator of every nonempty subset of $R$ is generated by an idempotent (a projection). It is clear that every Baer $\star$-ring is a Baer ring.
Kaplansky showed that  Baer $\star$-rings  satisfy  the SB property. This motivated us to investigate whether any Baer ring  satisfies the SB property. In this paper we carry out a study of this question and investigate when two subisomorphic Baer modules are isomorphic.  Besides, we study extending modules which satisfy the SB property. 
We characterize a commutative domain $R$ over which any  pair of subisomorphic extending modules are isomorphic.

\end{abstract}
\maketitle

%%%%%%%%%%%%%%%%%%%%%%%%%%%%%%%%%%%%%%%%%%%%%%%%%%%%%%%%%%%%%%%%%%%%%%%%%%%%%%%%%%%%%%%%%%%%%%%%%%%%%%%%%%%
%%%%%%%%%%%%%%%%%%%%%%%%%%%%%%%%%%%%%%%%%%%%%%%%%%%%%%%%%%%%%%%%%%%%%%%%%%%%%%%%%%%%%%%%%%%%%%%%%%%%%%%%%%%%%%%%
%%%%%%%%%%%%%%%%%%%%%%%%%%%%%%%%%%%%%%%%%%%%%%%%%%%%%%%%%%%%%%%%%%%%%%%%%%%%%%%%%%%%%%%%%%%%%%%%%%%%%%%%%%%%%

\section{Introduction}
The famous Schr\"{o}der-Bernstein Theorem states that any two sets with one to one maps into  each other are isomorphic.
The question of whether two subisomorphic algebraic structures  are  isomorphic to each other has been of interest to a number of researchers. Various analogues of the Schr\"{o}der-Bernstein Theorem have been appeared for categories of associative rings, categories of functors and categories of $R$-modules
 \cite{bumby}, \cite{cantor2}, \cite{S.B}, \cite{Ashish}, \cite{soo.lee}, \cite{Rizvi.Muller}, \cite{rososhek2}, \cite{rososhek},  \cite{cantor1},\cite{Functor},   \cite{vasconcelos} and \cite{correct}.
 Bumby in 1965 \cite{bumby}, showed that any two injective modules which are subisomorphic to each other are isomorphic. M\"{u}ller and Rizvi in 1983 \cite{Rizvi.Muller}, extended Bumby's result for the class of continuous modules which are a generalization of injective modules.

\begin{thm}{\cite[Proposition 10]{Rizvi.Muller}}\label{M. R22}
Let $M$ and $N$ be continuous $R$-modules. If $M$ and $N$ are subisomorphic to each other, then $M\simeq N$.
\end{thm}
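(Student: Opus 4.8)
The plan is to run a Cantor--Schr\"oder--Bernstein telescoping argument inside $M$ and inside $N$ simultaneously, using the continuity conditions (C1) and (C2) to supply the splittings that injectivity provides for free in Bumby's theorem. Fix monomorphisms $f\colon M\to N$ and $g\colon N\to M$ witnessing the subisomorphism. First I would record the basic splitting consequence of (C2): since $(gf)^{k}(M)\cong M$ and $M$ is trivially a direct summand of itself, (C2) forces every $(gf)^{k}(M)$ to be a direct summand of $M$, and symmetrically every $(fg)^{k}(N)$ to be a direct summand of $N$. This produces the descending chain $M\supseteq g(N)\supseteq gf(M)\supseteq gfg(N)\supseteq\cdots$, whose even-indexed terms are direct summands of $M$ isomorphic to $M$ and whose odd-indexed terms are isomorphic to $N$.

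Next I would analyse the successive \emph{shells} of this chain by the modular law. Writing $M=gf(M)\oplus X$ and noting $gf(M)\subseteq g(N)$, I get $g(N)=gf(M)\oplus(g(N)\cap X)$, hence $N\cong g(N)\cong M\oplus(g(N)\cap X)$; symmetrically $M\cong N\oplus(f(M)\cap Y)$ for a suitable complement $Y$ of $fg(N)$ in $N$. The maps $gf$ and $fg$ then carry each shell isomorphically onto the shell two steps further down, which is exactly the bookkeeping of the set-theoretic Cantor--Schr\"oder--Bernstein argument. The goal of this step is to assemble compatible decompositions $M\cong M_{\infty}\oplus\bigoplus_{n\ge0}C_{n}$ and $N\cong N_{\infty}\oplus\bigoplus_{n\ge0}D_{n}$ in which consecutive shells match, $C_{n}\cong D_{n}$, and the two cores are identified, so that reassembling the pieces gives $M\cong N$. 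I would invoke (C1) whenever a submodule I wish to split off is only essential in, rather than equal to, a direct summand, in order to close it up before applying (C2).

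The hard part will be twofold, and it is precisely where continuity must do more than (C2) alone. First, the telescoping produces a genuinely infinite family of shells, so I must know that the countable direct sum $\bigoplus_{n}C_{n}$ actually splits off as a direct summand of $M$; for injective modules this is automatic and is the engine of Bumby's theorem, but for a merely continuous module it requires the full strength of continuity --- for instance the exchange property enjoyed by continuous modules, or the invariance of $M$ under idempotents of $\mathrm{End}(E(M))$. Second, I must control the stable core $M_{\infty}=\bigcap_{n}(gf)^{n}(M)$, showing it is complemented and correctly matched with its counterpart in $N$. An attractive alternative that isolates exactly this obstacle is to extend $f$ and $g$ to monomorphisms $\hat f\colon E(M)\to E(N)$ and $\hat g\colon E(N)\to E(M)$ of injective hulls (these stay injective because $M$ is essential in $E(M)$), run Bumby's result there to obtain a structured isomorphism $E(M)\cong E(N)$, and then intersect the resulting decomposition of $E(M)$ back down to $M$ using quasi-continuity, matching the pieces against those of $N$ via (C2). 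The entire difficulty then concentrates in verifying that these intersections reproduce $M$ and $N$ compatibly, rather than some larger essential submodules of the hulls.
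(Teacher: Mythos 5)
The paper does not actually prove this statement --- it is quoted from M\"{u}ller--Rizvi \cite[Proposition 10]{Rizvi.Muller} --- so I can only judge your sketch on its own terms, and on those terms both of the steps you defer are genuine gaps, not verifications. The first contains a concrete error: your assertion that for injective modules the countable direct sum $\bigoplus_{n}C_{n}$ of shells splits off ``automatically'' is false. The shells form a local direct summand (every finite subsum is a summand), and a local direct summand of an injective module need not be a summand: in $R=F^{\mathbb{N}}$, a von Neumann regular right self-injective ring, the coordinate summands $\{e_{n}R\}$ form a local direct summand whose sum $F^{(\mathbb{N})}$ is a proper essential submodule of $R_{R}$, hence not a summand. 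Bumby's proof does not split off the raw sum; it replaces the telescoped sums by injective hulls inside $M$ and uses that isomorphic submodules have isomorphic hulls. Your proposed continuous analogue --- ``close up with (C1), then apply (C2)'' --- lacks exactly this feature: an isomorphism between two submodules of a continuous module need not induce any isomorphism between the summands in which they are essential, and (C2) only applies to submodules already known to be isomorphic to summands. So the step where consecutive shells are matched after closure is unsupported, and it is precisely where the theorem's content lives. (The paper's Theorem \ref{sum of chain of direct summand} would close this gap if every local direct summand of a continuous module were a summand, but the example above shows that fails even for injective modules.)

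The second gap, which you flag yourself, is equally serious: the sum $M_{\infty}+\sum_{n}C_{n}$ is direct but in general only essential in $M$, so the decomposition $M\cong M_{\infty}\oplus\bigoplus_{n}C_{n}$ need not exist, and nothing in your plan matches $M_{\infty}$ with $N_{\infty}$. This is not a removable technicality --- the negative solutions to Kaplansky's test problem show that the formal Cantor--Schr\"{o}der--Bernstein bookkeeping cannot succeed for modules in general, so continuity must intervene in a way your sketch never makes precise. Your fallback tools do not suffice either: invariance of $M$ under idempotents of $\mathrm{End}(E(M))$ is exactly quasi-continuity, and the paper records that M\"{u}ller and Rizvi showed the theorem \emph{fails} for quasi-continuous modules, so no argument using only that invariance can work; and the hull route founders because Bumby produces \emph{some} isomorphism $E(M)\cong E(N)$, while non-isomorphic continuous modules can have isomorphic hulls and the isomorphism obtained need not carry $M$ into $N$. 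The known proofs instead exploit structural facts special to continuous modules --- for instance that $\mathrm{End}_{R}(M)$ modulo its Jacobson radical is von Neumann regular and right self-injective with idempotents lifting, a setting in which the Schr\"{o}der--Bernstein property for principal right ideals is available --- rather than an orbit decomposition of $M$ itself. As written, your proposal correctly locates the difficulties but resolves neither.
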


They constructed an example which shows that in the above theorem both $N$ and $K$ cannot be quasi-continuous.
For  abelian groups, Kaplansky in 1954 \cite[p.12]{infinite abelian groups},  posed  the following question, also known as  Kaplansky's  First Test Problem:   ``If $G$ and $H$ are abelian groups such that each one is isomorphic to a direct summand of the other,   are $G$ and $H$ necessarily isomorphic?"  Negative answers have been given to this question by several authors \cite{solution of kaplansky}, \cite{the kaplansky test problem} and \cite{I.kaplansky}. Besides Kaplansky in  1968 \cite[Theorem 41]{Kaplansky}, showed that every Baer $\star$-ring satisfies this analogue of the Schr\"{o}der-Bernstein Theorem. Recall that a ring $R$ with an involution $\star$ is called a {\it Baer $\star$-ring} if the
right annihilator of every nonempty subset of $R$ is generated by a projection $e$ (the
idempotent $e$ of the $\star$-ring $R$ is called a {\it projection} if $e^{\star}= e$). In particular he proved the following result:

\begin{thm}\cite[Theorem 41]{Kaplansky}\label{Kap.Baer}
  Let $R$ be a Baer $\star$-ring and   $e$,   $f$  be  projections  in $R$. If  $eR$ is isomorphic to a direct summand of $fR$ and $fR$ is isomorphic to a direct summand of $eR$ then $eR$ is isomorphic to $fR$.
 \end{thm}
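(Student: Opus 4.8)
The plan is to restate everything in terms of the Murray--von Neumann equivalence of projections and then run a Schr\"oder--Bernstein argument at the level of partial isometries, the Baer hypothesis entering precisely to guarantee the suprema and the assembly of partial isometries that the argument needs. First I would translate the hypotheses. Since $fR$ is itself a summand of $R$, every direct summand of $fR$ is a summand of $R$, hence of the form $f_1R$ for an idempotent $f_1$; using that a Baer $\star$-ring is Rickart $\star$ one may replace $f_1$ by a projection with $f_1\le f$ (meaning $f_1=f_1f=ff_1=f_1^\star$), and symmetrically obtain a projection $e_1\le e$. Thus the hypotheses read $eR\cong f_1R$ with $f_1\le f$ and $fR\cong e_1R$ with $e_1\le e$. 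The next step is to promote each module isomorphism to a partial isometry: an isomorphism $eR\cong f_1R$ is left multiplication by some $a$, and in the $\star$-setting one extracts from it a $w$ with $w^\star w=e$ and $ww^\star=f_1$, so that $e\sim f_1$; write $u$ for this and $v$ for the analogous partial isometry giving $f\sim e_1$. Since $f\sim e_1$, it then suffices to prove $e\sim e_1$.

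Next I would compose to reduce to a one-sided, \emph{sandwiched} problem. Set $w=vu$. A direct check using $u^\star u=e$, $uu^\star=f_1\le f=v^\star v$ and $vv^\star=e_1$ shows that $w$ is a partial isometry with $w^\star w=e$ and $ww^\star=g$, where $g=vf_1v^\star$ is a projection with $g\le e_1\le e$. Hence $e\sim g$ with $g\le e_1\le e$, and the theorem follows from the \emph{sandwich lemma}: if $g\le e_1\le e$ are projections and $e\sim g$, then $e\sim e_1$.

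To prove the sandwich lemma I would telescope. Put $c_0=e-e_1$ and $c_n=w^nc_0(w^\star)^n$. The left support of $w$ is $g\le e_1$, so $e_1w=w=ew$ and therefore the key identity $c_0w=0$ holds; from it one checks that the $c_n$ are pairwise orthogonal projections below $e$ with $wc_nw^\star=c_{n+1}$. Let $C=\sup_n c_n$. Then $w$ restricted to $C$ should implement $C\sim C-c_0$ (the ``shift'' $\sup_{n\ge0}c_n\sim\sup_{n\ge0}c_{n+1}$), while the identity gives $e-C\sim e-C$. Adding these orthogonal equivalences yields
\[
e=C+(e-C)\ \sim\ (C-c_0)+(e-C)=e-c_0=e_1,
\]
and combined with $v\colon f\sim e_1$ this gives $e\sim f$, that is, $eR\cong fR$.

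The hard part is everything hidden in ``$C=\sup_n c_n$ exists'' and ``$w$ implements $C\sim C-c_0$'', together with the additivity used in the last display: I must know that an orthogonal family of projections has a supremum and that the orthogonal family of partial isometries $\{w\,c_n\}$ --- which have orthogonal sources and orthogonal ranges --- assembles into a single partial isometry with source $C$ and range $C-c_0$. This \emph{orthogonal additivity of} $\sim$ is exactly where the Baer $\star$-hypothesis is indispensable: the existence of annihilator projections delivers the suprema, and the same structure is what lets one add up an orthogonal family of partial isometries in the absence of any topology (note that the naive module sum $\bigoplus_n c_nR$ is in general a proper submodule of $CR$, so the equivalence genuinely has to be built by this assembly rather than read off from a direct-sum decomposition). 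I would therefore isolate this additivity statement as the central lemma and establish it first; the Schr\"oder--Bernstein combinatorics above is then routine. The translation step at the start, turning a module isomorphism of $eR$ and $f_1R$ into a genuine partial isometry, is the other point deserving care.
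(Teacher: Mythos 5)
The paper does not prove this theorem itself---it is quoted from Kaplansky---but it does flag what the intended engine is: the completeness of the lattice of projections. Your telescoping argument is the classical Schr\"oder--Bernstein proof for \emph{$\star$-equivalence} of projections, and that part is essentially sound; indeed the ``orthogonal additivity'' you isolate as the hard central lemma is not needed in full generality here, since the whole orthogonal family $\{wc_n\}$ is cut out of the single element $w$: once one checks that conjugation by $w$ commutes with suprema of projections below $e$ (easy from Baer-ness), the assembled partial isometry is simply $wC+(e-C)$, and $e\sim e_1$ follows by direct computation.

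The genuine gap is your very first translation step: from an isomorphism $eR\cong f_1R$ you ``extract a $w$ with $w^\star w=e$ and $ww^\star=f_1$.'' That extraction is a polar decomposition and is \emph{not} available in a general Baer $\star$-ring: algebraically equivalent projections need not be $\star$-equivalent. Concretely, $R={\rm M}_{2}(\mathbb{F}_{3})$ with the transpose involution is a Baer $\star$-ring (the standard form on $\mathbb{F}_{3}^{2}$ is anisotropic, so every subspace carries an orthogonal projection), and the rank-one projections $e=\left(\begin{smallmatrix}1&0\\0&0\end{smallmatrix}\right)$ and $f=\left(\begin{smallmatrix}2&2\\2&2\end{smallmatrix}\right)$ satisfy $eR\simeq fR$; yet every $w\in fRe$ has the form $\left(\begin{smallmatrix}t&0\\t&0\end{smallmatrix}\right)$, so $w^\star w=\left(\begin{smallmatrix}2t^{2}&0\\0&0\end{smallmatrix}\right)\neq e$ because $2$ is not a square in $\mathbb{F}_{3}$. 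Since the theorem as stated is about module isomorphisms, reducing to Murray--von Neumann equivalence silently strengthens the hypothesis, and replacing partial isometries by algebraic equivalences $(x,y)$ throughout does not rescue the telescoping: without the involution one cannot identify $x(CR)$ with the complement of $c_{0}R$ inside $CR$, which is precisely the obstruction that makes plain Baer rings problematic in this paper. The statement as given is instead proved lattice-theoretically, in the spirit of the paper's Theorem \ref{complete lattice}: the module isomorphisms induce monotone maps between the complete lattices of projections under $e$ and under $f$, a Knaster--Tarski fixed point $p_{0}\leq e$ gives $e-p_{0}=\Psi(f-\Phi(p_{0}))$, and then $eR=p_{0}R\oplus(e-p_{0})R\simeq\Phi(p_{0})R\oplus(f-\Phi(p_{0}))R=fR$. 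If you want to keep your partial-isometry route, you must either add an axiom guaranteeing that algebraic equivalence of projections implies $\star$-equivalence, or switch to the fixed-point argument.
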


 An  $R$-module $M$ is called to {\it satisfy  the Schr\"{o}der-Bernstein property (or SB property)} if any two d-subisomorphic direct summands of $M$ are isomorphic (the $R$-modules $N$ and $K$ are called {\it d-subisomorphic to each other}  whenever $N$ is isomorphic to a direct summand of $K$ and $K$ is isomorphic to a direct summand of $N$).
 Modules which satisfy the SB property was introduced and studied in \cite{S.B}. (For convenience, we have modified the notation in \cite{S.B} from ``S-B" to ``SB").  Moreover a subclass $\mathcal{C}$ of $R$-modules is called to {\it satisfy the SB property} provided that any pair of members are isomorphic whenever they are d-subisomorphic to each other. Modules that satisfy the SB property, have been also studied in   \cite{Ashish}.
 By Kaplansky's  Theorem, every Baer $\star$-ring satisfies the SB property. \\
 Kaplansky in 1968 \cite{Kaplansky}, introduced the notion of Baer ring. Recall that a ring $R$ is called {\it Baer} if the right annihilator of any nonempty subset  of $R$ is generated,  as a right ideal,  by an idempotent.  It is easy to observe that the Baer property is left and right symmetric.
The notion of Baer ring was  extended to a general module theoretic,  introducing a Baer module. An $R$-module $M$ is called {\it  Baer} if for all $N\leq M$, ann$_{S}(N)$ is a direct summand of $S$ where $S={\rm End}_{R}(M)$
 \cite[Chapter 4]{Extension}. Clearly  $R$   is a Baer ring  if $R_R$ is  Baer. Every  nonsingular extending  modules is  Baer \cite[Theorem 4.1.15]{Extension}.

Now what Kaplansky proved for Baer $\star$-rings (Theorem \ref{Kap.Baer}), motivated us to ask ``{\it when  any  pair of subisomorphic or d-subisomorphic Baer   modules are   isomorphic to each other}".\\

In Section 2,
first we give some examples to show that subisomorphic Baer modules are not necessarily  isomorphic to each other (Examples \ref{1.1} and \ref{Baer.subisomorphic}).
We note that  in a Baer $\star$-ring $R$, the set of all projections forms a complete lattice and this is the main point in  the proof of Theorem \ref{Kap.Baer}.  Here, we give an example to show that for Baer rings this is not the case in general (Example \ref{triangular metrix ring}).
Besides, we show that every Baer (or Rickart) module  with only countably many direct summands,  satisfies the SB property (Theorem \ref{Baer.Countable}).
 Moreover, we show that duo Baer rings and reduced Baer rings satisfy the SB property (Theorem \ref{reduced} and Corollary  \ref{duo.Baer.S.B}).\\
 We also investigate rings over which any pair of subisomorphic Baer modules are isomorphic.
  For instance, if $R$ is a  right nonsingular ring  with  finite uniform dimension then   any two subisomorphic Baer $R$-module are isomorphic if and only if $R$ is a semisimple Artinian ring (Corollary  \ref{nonsingular}). Rings  over which every Baer module is injective are precisely semisimple Artinian rings (Theorem \ref{all Baer is inj}). Moreover, we investigate when two  extending modules which are subisomorphic to each other are isomorphic. It is  proved  that the study of the SB property for the class of extending modules  can be reduced to the study of such modules when they are singular and nonsingular (Theorem \ref{separate extendings}). We characterize commutative domains over which any pair of subisomorphic (torsion free) extending modules are isomorphic (Proposition \ref{12} and Corollary \ref{16}).

Throughout this paper, rings have nonzero identity unless  otherwise stated.  All
modules are right and unital.
Let $M$ be an $R$-module. The notations $N\subseteq M$, $N\leq M$, $N\leq_{ess}M$, or $N\leq_{\oplus} M$ mean that $N$ is a subset, a submodule, an essential submodule, or a direct summand of $M$, respectively. End$_{R}(M)$ is the ring of
$R$-endomorphisms of $M$ and E$(M)$ denotes the injective hull of $M$. The notations  $M^{(A)}$ and $M^{A}$ mean $\oplus_{i\in A}M_{i}$  and $\prod_{i\in A} M_{i}$, respectively,  where $A$ is an index  set and each $M_i\simeq M$.  The  annihilator of an element $m\in M$ will be denoted by ann$_{R}(m)$. ${\rm J}(R)$ stands for the Jacobson radical of a ring $R$.
For any $n\in\Bbb{N}$, ${\rm M}_{n}(R)$ stands for $n\times n$ matrix ring over a ring $R$.    The singular submodule of $M$ is denoted by ${\rm Z}(M)$ and the second singular submodule of $M$ is denoted  by ${\rm Z}_{2}(M)$.  The cardinal number of a set $X$ is denoted by $|X|$ and  the cardinal number of the set $\mathbb{N}$ of natural numbers is customarily denoted by $\aleph_{\circ}$.   For other  terminology and  results, we refer   the reader to   \cite{Extension},  \cite{Kaplansky}  and \cite{Muller}.  

%%%%%%%%%%%%%%%%%%%%%%%%%%%%%%%%%%%%%%%%%%%%%%%%%%%%%%%%%%%%%%%%%%%%%%%%%%%%%%%%%%%%%%%%%%%%%%%%%%%%%%%%%
%%%%%%%%%%%%%%%%%%%%%%%%%%%%%%%%%%%%%%%%%%%%%%%%%%%%%%%%%%%%%%%%%%%%%%%%%%%%%%%%%%%%%%%%%%%%%%%%%%%%%%%%%
%%%%%%%%%%%%%%%%%%%%%%%%%%%%%%%%%%%%%%%%%%%%%%%%%%%%%%%%%%%%%%%%%%%%%%%%%%%%%%%%%%%%%%%%%%%%%%%%%%%%%%%
%%%%%%%%%%%%%%%%%%%%%%%%%%%%%%%%%%%%%%%%%%%%%%%%%%%%%%%%%%%%%%%%%%%%%%%%%%%%%%%%%%%%%%%%%%%%%%%%%%%%
%%%%%%%%%%%%%%%%%%%%%%%%%%%%%%%%%%%%%%%%%%%%%%%%%%%%%%%%%%%%%%%%%%%%%%%%%%%%%%%%%%%%%%%%%%%%%%%%%%%%%

\section{\bf  On Schr\"{o}der-Bernstein property for Baer modules}
A ring $R$ with an involution $\star$ is called {\it a Baer $\star$-ring} if the right annihilator of every nonempty subset of $R$ is generated by an idempotent. We remind that an $R$-module $M$ is called {\it a Baer module} if ann$_{S}(N)$ is a direct summand of $S$ where $S={\rm End}_{R}(M)$ and $N\leq M$ (or equivalently, for every left ideal $I$ of $S$, ann$_{M}(I)$ is a direct summand of $M$).  A ring $R$ is called {\it Baer} if $R_R$ is Baer. In a  commutative domain $R$,  every right ideal is  Baer, as a module over $R$. \\
By Kaplansky's Theorem, it is  known that every Baer $\star$-ring satisfies the SB property. Since Baer $\star$-rings are Baer rings, it is natural to ask whether Baer rings do satisfy the SB property. So  we will be concerned with the question of when any two Baer modules which are subisomorphic or direct summand subisomorphic to  each other are necessarily isomorphic.\\

Consider the following conditions on an $R$-module $M$:\\
\noindent ($C_1$)  Every submodule of $M$ is essential in a direct summand of $M$.\\
\noindent ($C_2$) Every submodule isomorphic to a direct summand of $M$ is a direct
summand.\\
 \noindent ($C_3$) The sum of two independent direct summands of $M$ is again a
direct summand.

The module  $M$ is called  {\it continuous} if it has C$_1$ and C$_2$, {\it  quasi-continuous} if it has  C$_1$ and C$_3$ and  {\it extending} if it has C$_1$,  respectively \cite[Chapter 2]{Muller}.
It is well known that $C_2\Rightarrow C_3$. The following implications hold:
 \begin{center}
 Injective $\Rightarrow$ Quasi-injective $\Rightarrow$ Continuous $\Rightarrow$ Quasi-continuous $\Rightarrow$ Extending.
 \end{center}

Recall from \cite[1.1.8]{Extension}, that
the singular submodule Z$(M)$ of an $R$-module $M$ is defined by
Z$(M) = \{m \in  M \ |\ mI = 0$ for some essential right ideal
$I$ of $R \}$ and the submodule Z$_2(M)$ satisfying   Z$(M/{\rm
Z}(M))$ = Z$_2(M)/{\rm Z}(M)$ is called {\it the second singular
submodule of $M$}. The module $M$ is called {\it singular}
({\it nonsingular}) if Z$(M)=M$ (Z$(M)=0$). A ring $R$ is called {\it right nonsingular} if it is nonsingular as a right $R$-module.
An $R$-module $M$ is called {\it $\mathcal{K}$-nonsingular} if for any $\varphi\in S={\rm End}_{R}(M)$, ${\rm Ker}\varphi\leq_{ess}M$ implies $\varphi=0$. Every nonsingular module is $\mathcal{K}$-nonsingular.
We begin with a result from \cite{Extension} for latter uses.

%%%%%%%%%%%%%%%%%%%%%%%%%%%%%%%%%%%%%%%%%%%%%%%%%%%%%%%%%%%%%%%%%%%%%%%%%%%%%%%%%%%%%%%%%%%%%%%%%%%%%%%%%%5

\begin{thm}\cite[Lemma 4.1.16]{Extension}\label{extending nonsingular}
Every $\mathcal{K}$-nonsingular extending  module is Baer. In particular, every nonsingular extending module is Baer. 
\end{thm}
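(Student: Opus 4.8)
The plan is to verify the Baer condition directly from its module-theoretic form: for every left ideal $I$ of $S = {\rm End}_R(M)$, the annihilator ${\rm ann}_M(I) = \bigcap_{\varphi \in I} {\rm Ker}\,\varphi$ must be shown to be a direct summand of $M$. Write $K = {\rm ann}_M(I)$. Since $M$ is extending (C$_1$), $K$ is essential in some direct summand $D$ of $M$; fix an idempotent $e \in S$ with $D = eM$ and $M = eM \oplus (1-e)M$. The whole task then reduces to promoting the essential inclusion $K \leq_{ess} D$ to an equality $K = D$, for then $K = eM \leq_{\oplus} M$ and we are done.

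To obtain $K = D$ it suffices to prove $D \subseteq {\rm Ker}\,\varphi$ for every $\varphi \in I$, since this forces $D \subseteq \bigcap_{\varphi \in I}{\rm Ker}\,\varphi = K$. This is exactly where $\mathcal{K}$-nonsingularity enters. Fixing $\varphi \in I$, I would consider the endomorphism $\varphi e \in S$. First record that $K \subseteq {\rm Ker}\,\varphi$, so $K \leq {\rm Ker}(\varphi|_D)$; because $K \leq_{ess} D$, this makes ${\rm Ker}(\varphi|_D)$ essential in $D$. A short computation with $e$, using that $e$ fixes $D$ and annihilates $(1-e)M$, gives ${\rm Ker}(\varphi e) = {\rm Ker}(\varphi|_D) \oplus (1-e)M$. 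Being the direct sum of an essential submodule of $D$ with all of the complementary summand $(1-e)M$, this kernel is essential in $M = D \oplus (1-e)M$. The $\mathcal{K}$-nonsingularity of $M$ then yields $\varphi e = 0$, i.e. $\varphi(D) = \varphi e M = 0$, which is precisely $D \subseteq {\rm Ker}\,\varphi$.

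Running this argument over all $\varphi \in I$ gives $D \subseteq K$, hence $K = D$ is a direct summand, establishing the Baer property. The step that I expect to demand the most care is the essentiality claim: one must check both that an essential submodule of $M$ contained in $D$ is essential in $D$, and that essentiality is preserved when one forms the direct sum with the complementary summand, so that indeed ${\rm Ker}(\varphi e) \leq_{ess} M$. The final ``in particular'' assertion is then immediate, since every nonsingular module is $\mathcal{K}$-nonsingular (as noted just above the statement), so a nonsingular extending module already satisfies the hypotheses of the first claim and is therefore Baer.
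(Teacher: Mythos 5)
Your argument is correct and complete: the identification $\mathrm{Ker}(\varphi e)=\mathrm{Ker}(\varphi|_D)\oplus(1-e)M$, the essentiality of a direct sum of essential submodules of independent summands, and the use of $\mathcal{K}$-nonsingularity to kill $\varphi e$ are all valid, and together they give $eM=\mathrm{ann}_M(I)$ as required. The paper itself offers no proof (it only cites the result from the reference \cite{Extension}), and your argument is essentially the standard one given there, so nothing further is needed.
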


%%%%%%%%%%%%%%%%%%%%%%%%%%%%%%%%%%%%%%%%%%%%%%%%%%%%%%%%%%%%%%%%%%%%%%%%%%%%%%%%%%%%%%%%%%%%%%%%%%%%%%%55

In the following, we give some   examples  to show that any two subisomorphic Baer modules are not isomorphic in general.

%%%%%%%%%%%%%%%%%%%%%%%%%%%%%%%%%%%%%%%%%%%%%%%%%%%%%%%%%%%%%%%%%%%%%%%%%%%%%%%%%%%%%%%%%%%%%%%%%%%%%%%%%

\begin{exam}\label{1.1}
Let $R$ be a commutative domain and $I$ be any non principal ideal of $R$. Clearly $R$,  $I$ are Baer $R$-modules and subisomorphic to each other however  $R\not\simeq I$.
\end{exam}

%%%%%%%%%%%%%%%%%%%%%%%%%%%%%%%%%%%%%%%%%%%%%%%%%%%%%%%%%%%%%%%%%%%%%%%%%%%%%%%%%%%%%%%%%%%%%%%%%%

In the following  we show that even if $N$ and $K$  are  Baer $R$-modules with the stronger condition: ``$N$ is isomorphic to a submodule of $K$ and $K$ is isomorphic to a direct summand of $N$" then   $N$ is not isomorphic to $K$ in general.

%%%%%%%%%%%%%%%%%%%%%%%%%%%%%%%%%%%%%%%%%%%%%%%%%%%%%%%%%%%%%%%%%%%%%%%%%%%%%%%%%%%%%%%%%%%%%%%%%%%%%%%%55

\begin{exam}\label{Baer.subisomorphic}
Let $N=\Bbb{Q}^{(\Bbb{N})}\oplus \Bbb{Z}$ and $K=\Bbb{Q}^{(\Bbb{N})}$.
 Thus the $\Bbb{Z}$-modules  $N$ and $K$ are nonsingular extending
 \cite[p. 19]{Muller}. Therefore by Theorem \ref{extending nonsingular}, $N$ and $K$ are Baer. Moreover, it is clear that $K\leq_{\oplus} N$ and $N$ is isomorphic to a submodule of $K$, however, $N$ is not isomorphic to $K$.
\end{exam}

%%%%%%%%%%%%%%%%%%%%%%%%%%%%%%%%%%%%%%%%%%%%%%%%%%%%%%%%%%%%%%%%%%%%%%%%%%%%%%%%%%%%%%%%%%%%%%%%%%%%%%%555555

We recall that an $R$-module $M$ is called {\it Rickart} if ${\rm Ker}f={\rm ann}_{M}(f)\leq_{\oplus}M$ where $f\in {\rm End}_{R}(M)$. A ring $R$ is called {\it right (left) Rickart} if $R_R$ (${}_{R}R$) is Rickart. Clearly every Baer module is Rickart. For more details see \cite[Chapter 3]{Extension}. \\
We recall that a ring $R$ is called {(\it von-Neumann) regular} provided that for each $r\in R$, $r\in rRr$. It is well known that   regular rings $R$ are precisely the ones whose  every principal (finitely generated) right ideals are direct summands. The following result was shown in \cite{Rangaswamy}:

\begin{thm}\cite[Theorem 4]{Rangaswamy}\label{Rangaswamy}
Let $M$ be an $R$-module and $S={\rm End}_{R}(M)$. Then $S$ is a  regular ring if and only if for each $\varphi\in S$, ${\rm Ker}\varphi$ and ${\rm Im}\varphi$ are direct summands of $M$.
\end{thm}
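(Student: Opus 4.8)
The plan is to exploit the standard bijective correspondence between idempotents $e=e^2\in S$ and direct sum decompositions $M={\rm Im}(e)\oplus{\rm Ker}(e)$: an idempotent endomorphism splits $M$ with image and kernel both direct summands. The two implications then reduce to producing, respectively, the right idempotents from a regularity relation, and a quasi-inverse from the splitting of kernel and image.

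For the forward direction, suppose $S$ is regular and fix $\varphi\in S$, so $\varphi=\varphi\psi\varphi$ for some $\psi\in S$. First I would verify that $e:=\varphi\psi$ and $f:=\psi\varphi$ are idempotents of $S$. From $\varphi=e\varphi$ one gets ${\rm Im}(\varphi)\subseteq{\rm Im}(e)$, while trivially ${\rm Im}(e)={\rm Im}(\varphi\psi)\subseteq{\rm Im}(\varphi)$, so ${\rm Im}(\varphi)={\rm Im}(e)=eM$ is a direct summand. Dually, from $\varphi=\varphi f$ one checks ${\rm Ker}(\varphi)={\rm Ker}(f)=(1-f)M$: the inclusion $(1-f)M\subseteq{\rm Ker}(\varphi)$ follows from $\varphi(1-f)=0$, and conversely $\varphi(x)=0$ forces $f(x)=\psi\varphi(x)=0$. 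Hence ${\rm Ker}(\varphi)$ is a direct summand as well.

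For the converse, assume ${\rm Ker}(\varphi)$ and ${\rm Im}(\varphi)$ are direct summands for every $\varphi\in S$, and fix $\varphi$. Writing $M={\rm Ker}(\varphi)\oplus K$ and $M={\rm Im}(\varphi)\oplus L$, the restriction $\varphi|_{K}\colon K\to{\rm Im}(\varphi)$ is an isomorphism, since it is injective (as $K\cap{\rm Ker}(\varphi)=0$) and surjective (as $\varphi(M)=\varphi(K)$). Let $\theta=(\varphi|_{K})^{-1}$ and define $\psi\in S$ by declaring $\psi$ to equal $\theta$ on the summand ${\rm Im}(\varphi)$ and to be zero on $L$; this is well defined precisely because of the decomposition $M={\rm Im}(\varphi)\oplus L$. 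A direct computation then gives $\varphi\psi\varphi=\varphi$: for $m=k_{0}+k$ with $k_{0}\in{\rm Ker}(\varphi)$ and $k\in K$, one has $\psi\varphi(m)=\theta\varphi(k)=k$, whence $\varphi\psi\varphi(m)=\varphi(k)=\varphi(m)$. Thus $S$ is regular.

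I expect the main (modest) obstacle to lie in the converse, specifically in legitimately assembling $\psi$ as a genuine $R$-endomorphism of all of $M$: the isomorphism $\theta$ is only defined on ${\rm Im}(\varphi)$, and it is the availability of the \emph{complement} $L$ of ${\rm Im}(\varphi)$ --- guaranteed by the hypothesis that images are direct summands --- that lets one extend $\theta$ to $M$ by zero. Both direct summand hypotheses are genuinely used: the splitting of the kernel yields the isomorphism $\varphi|_{K}$, while the splitting of the image supplies a domain on which to define the quasi-inverse.
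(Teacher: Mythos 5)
Your proof is correct: both directions are the standard argument (extracting the idempotents $\varphi\psi$ and $\psi\varphi$ from a regularity relation, and conversely building a quasi-inverse by inverting $\varphi$ on a complement of the kernel and extending by zero on a complement of the image), and each step checks out. Note that the paper itself gives no proof of this statement --- it is quoted from Rangaswamy's paper as a known result --- so there is nothing to compare against; your argument is the expected one.
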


%%%%%%%%%%%%%%%%%%%%%%%%%%%%%%%%%%%%%%%%%%%%%%%%%%%%%%%%%%%%%%%%%%%%%%%%%%%%%%%%%%%%%%%%%%%%%%%%%%%%%%%55

By the above theorem, any   $R$-module $M$,  with the regular endomorphism ring is a Rickart module.\\
 Following  \cite[Chapter 4]{Muller}, an  $R$-module $M$ is said to have {\it $D_{2}$ property} whenever  $M/N$ is isomorphic to a direct summand of $M$ implies that  $N$ is a direct summand of $M$ where $N\leq M$. It is well known that every quasi-projective module has ${\rm D}_{2}$ property. Moreover, every  Rickart module has  ${\rm D}_2$ property \cite[Theorem 1.5]{dual rickart}.\\
Following \cite{S.B}, an $R$-module $M$ is said to  {\it satisfy  the Schr\"{o}der-Bernstein property (or SB property)} if any two d-subisomorphic direct summands of $M$ are isomorphic (the $R$-modules $N$ and $K$ are called {\it d-subisomorphic to each other}  whenever $N$ is isomorphic to a direct summand of $K$ and $K$ is isomorphic to a direct summand of $N$). A ring $R$ is said to satisfy the SB property if $R_R$ has the SB property. We note that the notion of the SB property for rings is right and left symmetric \cite[Theorem 2.4 (c)]{S.B}.\\
 In the following lemma, we show that  Rickart modules $N$ and $K$ are  d-subisomorphic to each other if and only if they are epimorphic images of each other.

 %%%%%%%%%%%%%%%%%%%%%%%%%%%%%%%%%%%%%%%%%%%%%%%%%%%%%%%%%%%%%%%%%%%%%%%%%%%%%%%%%%%%%%%%%%%%%%%%%%%%%%%

 \begin{lem}\label{dual S.B and Baer}
 Let $N$ and $K$ be Rickart modules. Then $N$ and $K$ are d-subisomorphic to each other if and only if there are $R$-epimorphisms $N\rightarrow K$ and $K\rightarrow N$.
 \end{lem}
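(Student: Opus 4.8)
The plan is to establish the two implications separately, with essentially all of the content lying in the ``if'' direction.

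For the ``only if'' direction I would argue directly from the definitions. If $N$ and $K$ are d-subisomorphic, then $N$ is isomorphic to some direct summand $K_{1}$ of $K$; composing the projection $K\rightarrow K_{1}$ with an isomorphism $K_{1}\simeq N$ produces an epimorphism $K\rightarrow N$, and the symmetric argument gives an epimorphism $N\rightarrow K$. This half needs neither the Rickart hypothesis nor anything beyond unwinding the definitions.

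For the converse, suppose we are handed epimorphisms $f\colon N\rightarrow K$ and $g\colon K\rightarrow N$. The key idea is to pass to the composite endomorphisms, where the Rickart hypothesis can be applied. Consider $gf\in{\rm End}_{R}(N)$: it is surjective, so $N/{\rm Ker}(gf)\simeq N$, which is trivially a direct summand of $N$. Since every Rickart module has the ${\rm D}_{2}$ property, it follows that ${\rm Ker}(gf)\leq_{\oplus}N$, and I would write $N={\rm Ker}(gf)\oplus N'$. Next I would check that $gf$ restricts to an isomorphism of $N'$ onto $N$: injectivity is immediate from $N'\cap{\rm Ker}(gf)=0$, and surjectivity follows from $gf(N)=gf(N')=N$. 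Letting $e\colon N\rightarrow N'$ denote the inverse of this restriction, a short computation gives $g\circ(fe)={\rm id}_{N}$, so $fe\colon N\rightarrow K$ is a split monomorphism; hence $N$ is isomorphic to the direct summand $(fe)(N)$ of $K$, with complement ${\rm Ker}\,g$. Running the identical argument with $fg\in{\rm End}_{R}(K)$ in place of $gf$ shows that $K$ is isomorphic to a direct summand of $N$, so $N$ and $K$ are d-subisomorphic, as required.

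The crux of the argument, and the step I would treat most carefully, is the passage from ``epimorphic image'' to ``direct summand''. A pair of arbitrary epimorphisms between $N$ and $K$ carries no splitting information on its own; it is precisely the formation of the endomorphism $gf$ together with the Rickart property (equivalently the ${\rm D}_{2}$ property) that upgrades the surjection $gf$ into a splitting. Once the decomposition $N={\rm Ker}(gf)\oplus N'$ is in hand, verifying that $gf|_{N'}$ is an isomorphism and that $g(fe)={\rm id}_{N}$ is routine, so I anticipate no further obstacle.
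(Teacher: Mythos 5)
Your proof is correct and takes essentially the same route as the paper: both reduce the statement to the fact that Rickart modules have the ${\rm D}_2$ property. The only difference is that the paper then simply cites \cite[Lemma 2.4]{DSB} for the equivalence of d-subisomorphism and mutual epimorphy for ${\rm D}_2$ modules, whereas you supply a complete and correct proof of that cited lemma (splitting ${\rm Ker}(gf)$ off via ${\rm D}_2$ and producing the split monomorphism $fe$).
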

 \begin{proof}
 We note that every Rickart module has D$_{2}$ property \cite[Theorem 1.5]{dual rickart} and for modules $M_1$ and $M_2$ with D$_2$ property, $M_1$ and $M_2$ are d-subisomorphic to each other if and only if they are epimorphic images of each other  \cite[Lemma 2.4]{DSB}.
 \end{proof}

In the next  example, we show that two Rickart modules which are
d-subisomorphic
to each other are not isomorphic in general.

\begin{exam}\label{2}
Suppose that  $V$ is  an infinite dimensional vector space over a field $F$ with $S= {\rm End}_{F}(V)$. Let  $\beta=\{v_{i}\}_{i\in I}$ be a basis for $V_{F}$ and $R:=\{(f,g)\in S\times S \  | $ rank$(f-g)< \infty \}$. Clearly $R$ is a subring of $S\times S$. We note that  $R$ is a regular ring and so by Theorem \ref{Rangaswamy}, $R_R$ is Rickart.
There exist idempotents $e$ and $ g$  in $R$ such that $eR$ and $gR$ are d-subisomorphic to each other however $eR$ is not isomorphic to $gR$ (see \cite[Example 2.2]{S.B} for more details).
Since every direct summand of  a Rickart  module has the property \cite[Proposition 4.5.4]{Extension}, $eR$ and $gR$ are Rickart $R$-module.  Therefore the Rickart module $R_R$ does not satisfy the SB property.
\end{exam}

%%%%%%%%%%%%%%%%%%%%%%%%%%%%%%%%%%%%%%%%%%%%%%%%%%%%%%%%%%%%%%%%%%%%%%%%%%%%%%%%%%%%%%%%%%%%%%%%%%%%%%%%%

Regarding examples \ref{1.1}, \ref{Baer.subisomorphic},  \ref{2}, and Theorem \ref{Kap.Baer} about Baer $\star$-rings, it is natural to ask the question: {\it ``does any Baer module satisfy the SB property?"}\\
It is clear that  any Baer module satisfies the SB property if and only if any pair of Baer modules which are d-subisomorphic to each other are isomorphic.
In order to answer this question, we note that  the main point in the proof of Theorem \ref{Kap.Baer},  is that the set of all projections in a Baer $\star$-ring forms  a complete lattice under  $``\leq"$ (if $e, f$ are idempotents in a ring $R$, we write $e\leq f$ in case $ef=fe=e$, i.e., $e\in fRf$). While in a Baer ring, the set of all  right ideals generated by idempotents forms a complete lattice \cite[Theorem 3.1.23]{Extension}.  In the next example we show that  the set of all idempotents in a Baer ring  is not   a complete lattice in general.

%%%%%%%%%%%%%%%%%%%%%%%%%%%%%%%%%%%%%%%%%%%%%%%%%%%%%%%%%%%%%%%%%%%%%%%%%%%%%%%%%%%%%%%%%%%%%%%%%%%

\begin{exam}\label{triangular metrix ring}
Suppose that  $R={\rm M}_{3}(\Bbb{Z})$ is  the $3\times 3$ matrix ring over $\Bbb{Z}$.  Then  $R$ is a Baer ring  \cite[Theorem 6.1.4]{Extension}. Let   $e_{x}=\left[\begin{array}{ccc} 1 &0&x \\0&0&0\\0&0&0
\end{array}\right]$ where $x\in\Bbb{Z}$. Clearly each  $e_{x}$ is an idempotent in $R$. We show that the set E $=\{e_{x} \ | \ x\in \Bbb{Z}\}$ has no  supremum in the lattice of all idempotents in $R$. To see this, let Sup$(E)=f$ where $f=\left[\begin{array}{ccc} a_{1} &b_{1}&c_{1} \\ a_{2}& b_{2}& c_{2}\\a_{3}& b_{3}&c_{3}
\end{array}\right]$ be an idempotent in $R$.
 Thus $e_{x}\leq f$ for every  $x\in\Bbb{Z}$ and so $fe_{x}=e_{x}f=e_{x}$. This shows that $a_{1}=c_{3}=1$, $b_{1}=c_{1}=a_{2}=a_{3}=b_{3}=0$ and  $f=\left[\begin{array}{ccc} 1 &0&0 \\0&b_{2}&c_{2}\\0&0&1
\end{array}\right]$. Since $f^{2}=f$,  $b_{2}^{2}=b_{2}$ and $b_{2}c_{2}+c_{2}=c_{2}$. Therefore either  $b_{2}=0$ or $b_{2}=1$.  If $b_{2}=1$  then $c_{2}=0$. Thus either $f=\left[\begin{array}{ccc} 1 &0&0 \\0&0&c_{2}\\0&0&1
\end{array}\right]$ or $f=\left[\begin{array}{ccc} 1 &0&0 \\0&1&0\\0&0&1
\end{array}\right]$. Let $f_{1}:=\left[\begin{array}{ccc} 1 &0&0 \\0&1&0\\0&0&1
\end{array}\right]$ and $f_{2}:=\left[\begin{array}{ccc} 1 &0&0 \\0&0&c_{2}\\0&0&1
\end{array}\right]$. Suppose that   $f=f_{2}$.  In this case, let  $g=\left[\begin{array}{ccc} 1 &0&0 \\0&0&\alpha \\0&0&1
\end{array}\right]$ where $c_{2}\neq\alpha\in\Bbb{Z}$.
Clearly  for every $x\in\Bbb{Z}$,   $e_{x}\leq g$.   Thus  $f_{2}\leq g$ and then  $c_{2}=\alpha$, a contradiction. Otherwise, $f=f_{1}$. Since for every $x\in\Bbb{Z}$, $e_{x}\leq f_{2}$, we have  $f_{1}\leq f_{2}$, a contradiction. Thus $E$ has no  supremum between the set of all idempotents in $R$. Hence the set of all idempotents of $R$ is not a complete lattice.
\end{exam}

%%%%%%%%%%%%%%%%%%%%%%%%%%%%%%%%%%%%%%%%%%%%%%%%%%%%%%%%%%%%%%%%%%%%%%%%%%%%%%%%%%%%%%%%%%%%%%%%%%%%%%%%%

By the following result from \cite{S.B}, any   module with  idempotents in its endomorphism ring forming a complete lattice  has the SB property.

\begin{thm}\label{complete lattice}\cite[Theorem 2.23]{S.B}
Let $M$ be  an  $R$-module and  $S={\rm End}_{R}(M)$. If the set of all idempotents in $S$ is a complete lattice with respect to the ordering $e\leq f$ then $M$ satisfies the  SB property.
\end{thm}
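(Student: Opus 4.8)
The plan is to reduce the SB property to the following assertion and then run a module-theoretic Schr\"{o}der--Bernstein argument governed by the complete lattice hypothesis: if $N$ and $K$ are direct summands of $M$ that are d-subisomorphic to each other, then $N\simeq K$. Fix isomorphisms $\alpha\colon N\to K_{1}$ and $\beta\colon K\to N_{1}$ onto direct summands $K_{1}\leq_{\oplus}K$ and $N_{1}\leq_{\oplus}N$, write $K=K_{1}\oplus K_{2}$, and set $\phi=\beta\alpha\in{\rm End}_{R}(N)$. Then $\phi$ is a monomorphism whose image $\phi(N)=\beta(K_{1})$ is a direct summand of $N$, being the isomorphic image under $\beta$ of the summand $K_{1}$ of $K$. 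Inductively each $P_{n}:=\phi^{n}(N)$ is a direct summand of $N$, so we obtain a descending chain $N=P_{0}\supseteq P_{1}\supseteq\cdots$, and after fixing a complement $N=P_{1}\oplus D$ the telescoping decompositions $N=D\oplus\phi(D)\oplus\cdots\oplus\phi^{n-1}(D)\oplus P_{n}$ for every $n$, where each $\phi^{i}(D)\simeq D$.

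The crux is to pass to the limit. First I would observe that the sub-summands of $N$ correspond to the idempotents of $S$ lying below the idempotent $e$ with $eM=N$, and that in a complete lattice the interval $[0,e]$ is again complete; the same holds for $K$, while all the idempotents involved live in the single complete lattice of idempotents of $S$. Hence the suprema and infima needed below exist as idempotents, and therefore as direct summands. Using completeness I would realize $P_{\infty}:=\bigcap_{n}P_{n}$ as the infimum of a coherently chosen descending chain of projections onto the $P_{n}$, so that $P_{\infty}\leq_{\oplus}N$, and then verify $N=\big(\bigoplus_{n\geq 0}\phi^{n}(D)\big)\oplus P_{\infty}$. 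Applying $\alpha$ and writing $\psi=\alpha\beta\in{\rm End}_{R}(K)$, so that $\alpha\phi^{n}=\psi^{n}\alpha$, the summand $K_{1}=\alpha(N)$ inherits the parallel decomposition $\alpha(N)=\big(\bigoplus_{n\geq 0}\psi^{n}(D')\big)\oplus\alpha(P_{\infty})$ with $D'=\alpha(D)\simeq D$, whence $K=K_{2}\oplus\alpha(N)$ acquires an analogous complete decomposition.

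With these two decompositions in hand I would build the isomorphism $N\to K$ in Schr\"{o}der--Bernstein fashion: glue $\alpha$ on the moving summands $\bigoplus_{n}\phi^{n}(D)$, absorbing the discrepancy $K_{2}$ through the index shift $\phi^{n}(D)\mapsto\psi^{n}(D')$, with $\beta^{-1}$ on the residual summand lying inside $N_{1}=\beta(K)$, and check that the two partial maps have complementary images in $K$ so that the combined map is a well-defined isomorphism. Equivalently, one may match the two decompositions term by term using $\phi^{n}(D)\simeq D\simeq D'\simeq\psi^{n}(D')$ and $P_{\infty}\simeq\alpha(P_{\infty})$ to conclude $N\simeq K$ directly. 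Either route reduces $N\simeq K$ to the existence and correct behaviour of the limit summands.

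The main obstacle is exactly this passage to the limit: for an infinite independent family of direct summands the sum need not be a direct summand, and the lattice supremum of the corresponding idempotents computes the generated summand only up to closure, so one must check that the infimum (respectively supremum) furnished by completeness genuinely realizes $P_{\infty}=\bigcap_{n}P_{n}$ (respectively $\bigoplus_{n}\phi^{n}(D)$), and that the residual complement sits inside $N_{1}$ so that $\beta^{-1}$ is defined there and the glued map is bijective. Completeness of the idempotent lattice is precisely the hypothesis that removes this obstacle, furnishing the limit idempotents that ordinary finite telescoping cannot produce; verifying that these idempotents interact correctly with the module decompositions, via the modular behaviour of the summand lattice, is the technical heart of the argument.
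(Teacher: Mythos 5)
The paper does not prove this statement itself --- it is imported verbatim from \cite[Theorem 2.23]{S.B} --- so there is no in-paper proof to compare against; judging your argument on its own terms, it has a genuine gap at exactly the point you identify as the ``technical heart.'' The decomposition $N=\bigl(\bigoplus_{n\geq 0}\phi^{n}(D)\bigr)\oplus P_{\infty}$ with $P_{\infty}=\bigcap_{n}P_{n}$ is false in general, and completeness of the idempotent lattice does not repair it. Take $M=N=K=F^{\mathbb{N}}$ over a field $F$, $\alpha$ the right shift onto $K_{1}=0\times F^{\mathbb{N}}$ and $\beta=\mathrm{id}$: then $P_{n}=0^{n}\times F^{\mathbb{N}}$, $D=F\times 0$, $P_{\infty}=0$, yet $\bigoplus_{n}\phi^{n}(D)=F^{(\mathbb{N})}\subsetneq F^{\mathbb{N}}=N$. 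The lattice hypothesis only guarantees \emph{some} idempotent $u$ with $u\geq u_{n}$ for all $n$ (where $u_{n}M=\phi^{n}(D)$), and then $uM\supseteq\bigoplus_{n}\phi^{n}(D)$ with strict containment possible; dually, the infimum $g_{\infty}$ of a descending chain of idempotents satisfies only $g_{\infty}M\subseteq\bigcap_{n}g_{n}M$. Your term-by-term matching needs the summands to be \emph{exactly} $\phi^{n}(D)$ and $P_{\infty}$, so once the limit idempotents fail to realize the sum and the intersection on the nose, the gluing of $\alpha$ and $\beta^{-1}$ has nothing to act on and the argument collapses. (This is the same reason Kaplansky's telescoping proof for Baer $\star$-rings does not transport to modules: there the supremum of projections is a genuine closure with good exactness properties, plus one needs additivity of equivalence over orthogonal families.)

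The way a complete lattice of idempotents is actually exploited here is not by taking limits of telescoping decompositions but by a Knaster--Tarski fixed-point argument, which avoids infinite direct sums entirely. With $eM=N$, $fM=K$, one checks that $[0,e]$ and $[0,f]$ are complete lattices, that $u\mapsto e-u$ is an order-reversing involution of $[0,e]$ (and likewise for $f$), and that $\alpha$ and $\beta$ induce monotone maps $\alpha_{*}\colon[0,e]\to[0,f]$, $\beta_{*}\colon[0,f]\to[0,e]$ sending $u$ to the idempotent with image $\alpha(uM)$, resp.\ $\beta(vM)$. The composite $F(u)=e-\beta_{*}\bigl(f-\alpha_{*}(u)\bigr)$ is monotone on the complete lattice $[0,e]$, hence has a fixed point $u$; then $N=uM\oplus(e-u)M$ and $K=\alpha_{*}(u)M\oplus(f-\alpha_{*}(u))M$ with $\alpha$ an isomorphism $uM\to\alpha_{*}(u)M$ and $\beta$ an isomorphism $(f-\alpha_{*}(u))M\to(e-u)M$, so $N\simeq K$. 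I would encourage you to rebuild your proof along these lines, or else to supply the (currently missing, and in general false) justification that the limit idempotents produced by completeness give the exact decompositions your matching requires.
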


%%%%%%%%%%%%%%%%%%%%%%%%%%%%%%%%%%%%%%%%%%%%%%%%%%%%%%%%%%%%%%%%%%%%%%%%%%%%%%%%%%%%%%%%%%%%%%%%%%%%%%%%%%%%5

 In what follows, we show that under some certain conditions, any Baer module satisfies  the SB property.

\begin{thm}\label{reduced}
Let $N$ and $K$ be two  Baer $R$-modules which are d-subisomorphic to  each other. If for all idempotents $g, h\in {\rm  End}_{R}(N)$, $gh=0$ implies $hg=0$, then $N\simeq K$.  In particular every reduced Baer ring satisfies the SB property.
\end{thm}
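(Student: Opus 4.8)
The plan is to show that the orthogonality hypothesis forces every idempotent of $S=\mathrm{End}_{R}(N)$ to be central, and then to combine this with the Baer property via Theorem~\ref{complete lattice}. First I would record the data supplied by d-subisomorphism: monomorphisms $\alpha\colon N\to K$ and $\beta\colon K\to N$ with $K_{1}:=\alpha(N)\leq_{\oplus}K$ and $N_{1}:=\beta(K)\leq_{\oplus}N$. Here $\beta$ restricts to an isomorphism $K\xrightarrow{\ \sim\ }N_{1}$, while $\beta\alpha$ is a monomorphism identifying $N$ with $\beta(K_{1})$; moreover $\beta(K_{1})\leq_{\oplus}N_{1}$, because $K_{1}\leq_{\oplus}K$ and $\beta$ carries summands of $K$ to summands of $N_{1}$.

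The crux, and the step I expect to be the main obstacle, is the claim that ``$gh=0\Rightarrow hg=0$ for idempotents $g,h$'' already forces every idempotent of $S$ to be central. To prove it, I would fix an idempotent $e\in S$ and an arbitrary $x\in S$. The element $f=e+(1-e)xe$ satisfies $f^{2}=f$, $ef=e$ and $fe=f$, so $e(1-f)=e-ef=0$; applying the hypothesis to $e$ and the idempotent $1-f$ gives $(1-f)e=0$, that is $e-fe=e-f=0$, whence $f=e$ and therefore $(1-e)xe=0$. Symmetrically, $f'=e+ex(1-e)$ satisfies $f'e=e$ and $ef'=f'$, so $(1-f')e=0$, and the hypothesis now yields $e(1-f')=0$, i.e.\ $e-f'=0$, so $ex(1-e)=0$. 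As $x$ is arbitrary, $(1-e)Se=0=eS(1-e)$, hence $se=ese=es$ for every $s\in S$ and $e$ is central.

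Once every idempotent of $S$ is central I would finish as follows. Since $N$ is a Baer module, $S$ is a Baer ring \cite{Extension}, so the idempotent-generated right ideals of $S$ form a complete lattice \cite[Theorem~3.1.23]{Extension}. Because the idempotents are central, $e\mapsto eS$ is an order isomorphism onto that lattice (central idempotents with $eS=fS$ satisfy $e=fe=ef=f$), so the idempotents of $S$ themselves form a complete lattice under $e\leq f\Leftrightarrow ef=fe=e$. Theorem~\ref{complete lattice} then shows that $N$ satisfies the SB property. Applying this to the direct summands $N$ and $N_{1}$ of $N$, which are d-subisomorphic to each other (indeed $N_{1}\leq_{\oplus}N$, while $\beta\alpha$ embeds $N$ isomorphically onto $\beta(K_{1})\leq_{\oplus}N_{1}$), yields $N\simeq N_{1}\simeq K$. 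For the final assertion, a reduced ring has all its idempotents central, so a reduced Baer ring $R$ satisfies the hypothesis with $N=R_{R}$ and $S=R$, and the argument above shows that $R_{R}$, hence $R$, has the SB property.
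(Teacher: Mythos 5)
Your proof is correct and follows essentially the same route as the paper's: both arguments reduce the statement to showing that the idempotents of $S=\mathrm{End}_{R}(N)$ form a complete lattice (using that $S$ is a Baer ring, so its idempotent-generated right ideals form one) and then invoke Theorem~\ref{complete lattice}. The only divergence is in the middle step: you first upgrade the hypothesis to ``every idempotent of $S$ is central'' via the computation with $f=e+(1-e)xe$, which makes $e\mapsto eS$ an order isomorphism, whereas the paper transfers suprema from the lattice of ideals $e_iS$ directly by applying the hypothesis to $(1-e)e_i=0$; both are valid and of comparable length.
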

\begin{proof} Without loss of generality, we may assume  that $K\leq_{\oplus} N$ and $N$ is isomorphic to a direct summand of $K$. By Theorem \ref{complete lattice}, it is enough to show that the set of all idempotents in $S=$ End$_{R}(N)$ forms  a complete lattice. Since $N$ is Baer, $S$ is a Baer ring \cite[Theorem 4.2.8]{Extension}.  Therefore  the set of all direct summands in $S_S$ forms  a complete lattice  \cite[Theorem 3.1.23]{Extension}.  Now let $\{e_{i}\}_{i\in I}$ be an arbitrary family of idempotents in $S$. Thus  there exists an idempotent $e\in S$ such that Sup$\{e_{i}S\}_{i\in I}=eS$. Thus  each  $e_{i}S\leq eS$. Hence every  $e_{i}=ee_{i}$ and so $(1-e)e_{i}=0$. Now by our assumption on $S$,  for all $i\in I$, $e_{i}(1-e)=0$ and so  $e_{i}\leq e$. Now let $g$ be an arbitrary idempotent of $S$ such that every $e_{i}\leq g$.  Hence $e_{i}S\leq gS$ and so $eS\leq gS$. Thus $(1-g)e=0$ and  so $e(1-g)=0$. Thus  $e\leq g$. Therefore Sup$\{e_{i}\}_{i\in I}=e$. Similarly every family of idempotents in $S$ has an infimum.   The last statement is now clear.
 \end{proof}

%%%%%%%%%%%%%%%%%%%%%%%%%%%%%%%%%%%%%%%%%%%%%%%%%%%%%%%%%%%%%%%%%%%%%%%%%%%%%%%%%%%%%%%%%%%%%%%%%%%%%%%%%%%%%%%%%%%%%%%%

An $R$-module $M$ is called {\it a duo module } if  every submodule of $M$ is fully invariant.  A ring $R$ is called {\it a  right (or left) duo ring} if $R_R$ (or ${}_{R}R$) is duo. For more details, see \cite{duomodule}.

%%%%%%%%%%%%%%%%%%%%%%%%%%%%%%%%%%%%%%%%%%%%%%%%%%%%%%%%%%%%%%%%%%%%%%%%%%%%%%%%%%%%%%%%%%%%%%%%%%%%%%%%%

\begin{cor}\label{duo.Baer.S.B}
 Every  duo Baer module  satisfies the SB property. In particular every right (or left) duo  Baer ring satisfies the SB property.
\end{cor}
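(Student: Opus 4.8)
The plan is to deduce the corollary directly from Theorem~\ref{reduced} by verifying that the orthogonality-of-idempotents hypothesis is automatically satisfied for a duo module. Theorem~\ref{reduced} says that if $N,K$ are d-subisomorphic Baer modules and every pair of idempotents $g,h\in S={\rm End}_R(N)$ satisfies $gh=0\Rightarrow hg=0$, then $N\simeq K$. So the entire task reduces to the following implication: if $N$ is a duo module, then in $S$ orthogonality of idempotents is left--right symmetric. Once that is in hand, both statements of the corollary follow, the second being the special case $N=R_R$ (for which End$_R(R_R)\cong R$, and the condition ``$R_R$ duo'' is the right duo condition; the left-duo case is handled by the left--right symmetry of the Baer property together with the symmetry of the SB property noted earlier in the paper).

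First I would recall the key feature of duo modules relevant here: for a duo module $N$ every idempotent $e\in S={\rm End}_R(N)$ is central. This is a standard and well-known fact, so I would invoke it (citing \cite{duomodule}) rather than reprove it; the point is that for an idempotent $e$, the image $eN$ and the kernel $(1-e)N$ are both submodules of $N$, hence both fully invariant, which forces every endomorphism to commute with $e$. Granting that every idempotent of $S$ is central, the required implication is immediate: if $g,h$ are idempotents in $S$ with $gh=0$, then using centrality of $g$ we get $hg=gh=0$. Thus the hypothesis of Theorem~\ref{reduced} holds trivially, and any two d-subisomorphic Baer modules $N,K$ with $N$ duo are isomorphic, i.e.\ a duo Baer module satisfies the SB property.

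For the ``in particular'' clause I would argue as follows. If $R$ is a right duo ring then $R_R$ is a duo module, and since $R$ is a Baer ring, $R_R$ is a Baer module; by the first part $R_R$ satisfies the SB property, which by definition means $R$ satisfies the SB property. For the left duo case, I would use that the Baer property is left--right symmetric (as noted in the introduction) together with the fact, recalled earlier in the excerpt, that the SB property for rings is itself left--right symmetric \cite[Theorem~2.4(c)]{S.B}: a left duo Baer ring is a right-module-theoretic Baer ring whose opposite is right duo, so the conclusion transfers. I would present this second clause compactly since it is essentially a translation of definitions.

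I do not expect a genuine obstacle here, as the corollary is a clean specialization of Theorem~\ref{reduced}; the only substantive point is the fact that idempotents in the endomorphism ring of a duo module are central, and that is where I would be most careful to cite a precise reference rather than assert it. The mild subtlety worth flagging is making sure the hypothesis of Theorem~\ref{reduced} is stated for idempotents of ${\rm End}_R(N)$ (not of ${\rm End}_R(K)$), so I would apply the theorem with the duo module $N$ in the role for which the idempotent condition is checked, matching the asymmetric setup ``$K\le_\oplus N$ and $N$ isomorphic to a direct summand of $K$'' used in its proof.
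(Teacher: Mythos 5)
Your proposal is correct and follows essentially the same route as the paper: both reduce the corollary to Theorem~\ref{reduced} by using full invariance of $e(M)$ for an idempotent $e$ to show that $gh=0$ forces $hg=0$ (the paper computes $fe=efe=0$ directly, while you invoke the slightly stronger standard fact that idempotents in the endomorphism ring of a duo module are central, which amounts to the same computation). The handling of the ``in particular'' clause via the left--right symmetry of the Baer and SB properties also matches what the paper leaves to the reader.
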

\begin{proof} Let $R$ be a ring and $M$ be a duo Baer $R$-module. Suppose that $e, f\in S=$ End$_{R}(M$) are idempotents such that $ef=0$. By our assumption,  $e(M)$ is a fully invariant submodule of $M$. Thus $f(e(M))\leq e(M)$ and so $0=efe=fe$. An application of
 Theorem \ref{reduced} yields  the result.
 The last statement is now clear.
\end{proof}

%%%%%%%%%%%%%%%%%%%%%%%%%%%%%%%%%%%%%%%%%%%%%%%%%%%%%%%%%%%%%%%%%%%%%%%%%%%%%%%%%%%%%%%%%%%%%%%%%%%%%%%%%%%%%%%%%%%%%%%

\begin{cor}\label{omm.Baer}
Every commutative Baer ring satisfies the SB property.
\end{cor}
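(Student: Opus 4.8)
The plan is to recognize this as an immediate specialization of Corollary \ref{duo.Baer.S.B}, so the entire task reduces to verifying that a commutative ring is automatically a duo ring. First I would recall the definition: $R$ is a right duo ring precisely when $R_R$ is a duo module, meaning every right ideal of $R$ is fully invariant. Since $\mathrm{End}_R(R_R)\cong R$ acting by left multiplication, a right ideal $I$ is fully invariant if and only if $rI\subseteq I$ for every $r\in R$, that is, if and only if $I$ is a two-sided ideal.

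In a commutative ring every right ideal is trivially a two-sided ideal, so every right ideal of $R$ is fully invariant; hence $R_R$ is a duo module and $R$ is a (right) duo ring. Consequently a commutative Baer ring is in particular a duo Baer ring, and Corollary \ref{duo.Baer.S.B} applies directly to yield that $R$ satisfies the SB property. There is essentially no obstacle here: the only point to check is the elementary observation that commutativity forces the fully invariant condition on ideals, after which the result is inherited verbatim from the duo case. I would therefore state the proof in one or two sentences, noting that commutative rings are duo rings and invoking the preceding corollary.
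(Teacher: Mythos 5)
Your proposal is correct and matches the paper's proof, which simply cites Corollary \ref{duo.Baer.S.B}; you have merely spelled out the elementary observation that a commutative ring is a duo ring, which the paper leaves implicit. No further comment is needed.
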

\begin{proof}
It follows from Corollary \ref{duo.Baer.S.B}.
\end{proof}

%%%%%%%%%%%%%%%%%%%%%%%%%%%%%%%%%%%%%%%%%%%%%%%%%%%%%%%%%%%%%%%%%%%%%%%%%%%%%%%%%%%%%%%%%%%%%%%%%%%%%%%%%%%%%%%

By the next result from \cite{S.B}, every module with ascending chain condition on its direct summands satisfies the SB property.

\begin{thm}\cite[Theorem 2.16]{S.B}\label{sim.direct summand}
Let $M$ be an $R$-module with the following condition:\\
for every descending chain $M_{1}\geq M_{2}\geq...$ of direct summands of $M$, there exists $n\geq 1$ such that  $M_{n}\simeq M_{n+1}$. \\
Then $M$ satisfies the SB property. In particular, every module with descending (ascending) chain condition on direct summands  satisfies the SB property.
\end{thm}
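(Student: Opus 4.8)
The plan is to reduce to two d-subisomorphic direct summands $N, K \le_{\oplus} M$ and to produce $N \simeq K$ from a single descending chain of direct summands of $M$ whose terms alternate between copies of $N$ and copies of $K$; the hypothesis will then force two adjacent (hence oppositely-typed) terms to be isomorphic, which is exactly $N \simeq K$. First I would reduce, just as in the proof of Theorem \ref{reduced}, to the case where $K \le_{\oplus} N$ together with a monomorphism $\alpha \colon N \to K$ whose image $\alpha(N)$ is a direct summand of $K$: replacing $K$ by the summand of $N$ to which it is isomorphic changes neither the hypotheses nor the conclusion. Thus we start from the two base relations $K \le_{\oplus} N$ and $\alpha(N) \le_{\oplus} K$.

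The heart of the argument is the interleaved chain
\[
A_{2k} := \alpha^{k}(N), \qquad A_{2k+1} := \alpha^{k}(K) \qquad (k \ge 0),
\]
so that $A_0 = N \supseteq A_1 = K \supseteq A_2 = \alpha(N) \supseteq A_3 = \alpha(K) \supseteq \cdots$. Since $\alpha$ is injective, $\alpha^{k}$ restricts to an isomorphism of $N$ onto $\alpha^{k}(N)$, whence $A_{2k} \simeq N$ and $A_{2k+1} \simeq K$ for every $k$. To verify that this is a genuine descending chain of direct summands of $M$, I would apply the isomorphism $\alpha^{k}$ to the two base relations: an isomorphism carries a direct summand to a direct summand, so $\alpha^{k}(K) \le_{\oplus} \alpha^{k}(N)$ and $\alpha^{k+1}(N) \le_{\oplus} \alpha^{k}(K)$ for each $k$, and concatenating these realizes each term of the chain as a direct summand of the preceding one. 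As each $A_i$ is a direct summand of $N$ and $N \le_{\oplus} M$, every $A_i$ is a direct summand of $M$.

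Now the hypothesis applies to $\{A_i\}$: there is an index $n$ with $A_n \simeq A_{n+1}$. Since the even-indexed terms are isomorphic to $N$ and the odd-indexed terms to $K$, the consecutive indices $n, n+1$ have opposite parity, so one of $A_n, A_{n+1}$ is $\simeq N$ and the other $\simeq K$; hence $N \simeq K$, establishing the SB property. For the ``in particular'' clause, DCC on direct summands gives $M_n = M_{n+1}$ for large $n$, so the condition holds trivially; for ACC, writing $M_i = M_{i+1} \oplus C_i$ (legitimate because $M_{i+1}$, being a summand of $M$ contained in the summand $M_i$, is a summand of $M_i$ by the modular law) produces the ascending chain of direct summands $C_1 \le C_1 \oplus C_2 \le \cdots$ of $M$, which must stabilize, forcing some $C_{n+1} = 0$ and hence $M_{n+1} = M_{n+2}$.

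The step I expect to be the main obstacle is not any computation but the \emph{choice} of chain. The obvious attempt---taking $\phi \colon N \to N$ to be $\alpha$ followed by the inclusion $K \hookrightarrow N$ and using $N \supseteq \phi(N) \supseteq \phi^{2}(N) \supseteq \cdots$---is a dead end, because consecutive images under the \emph{injective} map $\phi$ are \emph{automatically} isomorphic (via $\phi$ itself), so the hypothesis yields nothing for that chain and every relation it produces collapses to a tautology at the level of isomorphism classes. The essential insight is therefore to interleave $N$ and $K$, so that ``consecutive terms isomorphic'' is no longer free and instead delivers precisely the desired conclusion $N \simeq K$.
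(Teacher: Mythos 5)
Your proof is correct, and it follows the standard interleaving argument: reduce to $K\le_{\oplus}N$ with a split monomorphism $\alpha\colon N\to K$, build the descending chain $N\supseteq K\supseteq \alpha(N)\supseteq\alpha(K)\supseteq\cdots$ of direct summands whose terms alternate between copies of $N$ and $K$, and let the hypothesis force two consecutive (hence oppositely typed) terms to be isomorphic; the ACC/DCC reduction via the complements $C_i$ and the modular law is also the expected one. The paper itself only quotes this result from \cite{S.B} without reproducing a proof, but your argument is exactly the one that source uses, so there is nothing to flag.
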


%%%%%%%%%%%%%%%%%%%%%%%%%%%%%%%%%%%%%%%%%%%%%%%%%%%%%%%%%%%%%%%%%%%%%%%%%%%%%%%%%%%%%%%%%%%%%%%%%%%%%%%5

\begin{thm}\label{Baer.Countable}
If $M$ is a  Baer $R$-module with only countably many direct summands, then $M$ satisfies the SB property.
\end{thm}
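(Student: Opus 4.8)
The plan is to deduce the result from Theorem~\ref{sim.direct summand}: it suffices to show that a Baer module $M$ with only countably many direct summands has DCC on direct summands, since a chain with no infinite strict descent has $M_n=M_{n+1}$ (hence $M_n\simeq M_{n+1}$) for some $n$, and in any case Theorem~\ref{sim.direct summand} grants the SB property to any module with DCC on direct summands.

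So I would argue by contradiction, supposing $M_1\supsetneq M_2\supsetneq\cdots$ is an infinite strictly descending chain of direct summands. Since each $M_{n+1}$ is a direct summand of $M$ contained in the direct summand $M_n$, the modular law gives $M_{n+1}\leq_{\oplus}M_n$; write $M_n=M_{n+1}\oplus C_n$ with $C_n\neq 0$. Telescoping yields $M_1=C_1\oplus\cdots\oplus C_k\oplus M_{k+1}$ for every $k$, so $\bigoplus_{n\geq 1}C_n$ is an internal direct sum of nonzero direct summands $C_n\leq_{\oplus}M_1\leq_{\oplus}M$. The first key step is to record, for each index $j$, the direct summand $D_j:=\bigl(\bigoplus_{n<j}C_n\bigr)\oplus M_{j+1}$ of $M_1$: because $C_n\subseteq M_n\subseteq M_{j+1}$ for $n>j$, the module $D_j$ contains $C_n$ for every $n\neq j$, while $M_1=C_j\oplus D_j$ shows $C_j\cap D_j=0$.

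The heart of the argument is a cardinality count. Since $M$ is Baer, $S={\rm End}_{R}(M)$ is a Baer ring \cite[Theorem 4.2.8]{Extension}, so the direct summands of $M$ (equivalently, those of $S_S$) form a complete lattice under inclusion \cite[Theorem 3.1.23]{Extension}. For each $T\subseteq\Bbb{N}$ let $E_T:=\bigvee_{n\in T}C_n$ be the least direct summand of $M$ containing every $C_n$ with $n\in T$. If $j\notin T$, then $D_j$ is a direct summand containing all $C_n$ with $n\in T$, whence $E_T\subseteq D_j$ and therefore $C_j\cap E_T\subseteq C_j\cap D_j=0$; as $C_j\neq 0$, this forces $C_j\not\subseteq E_T$. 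Since trivially $C_j\subseteq E_T$ whenever $j\in T$, we recover $T=\{\,j:C_j\subseteq E_T\,\}$, so $T\mapsto E_T$ is injective. This exhibits $2^{\aleph_{\circ}}$ pairwise distinct direct summands of $M$, contradicting the assumption that $M$ has only countably many direct summands. Hence no infinite strictly descending chain exists, $M$ has DCC on direct summands, and Theorem~\ref{sim.direct summand} gives the SB property.

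I expect the one delicate point to be the passage from the finite partial sums to the infinite joins $E_T$: this is precisely where the Baer hypothesis enters, through the completeness of the summand lattice, and the explicit complements $D_j$ are what prevent the join over $T$ from absorbing the ``missing'' summands $C_j$ with $j\notin T$, thereby securing injectivity of $T\mapsto E_T$. The remaining ingredients — the splittings $M_{n+1}\leq_{\oplus}M_n$, the telescoping decomposition, and the reduction to Theorem~\ref{sim.direct summand} — are routine.
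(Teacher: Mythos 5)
Your proof is correct and follows essentially the same route as the paper: both arguments reduce the statement to the descending chain condition on direct summands and then invoke Theorem~\ref{sim.direct summand}. The only difference is that where the paper cites \cite[Theorem 3.1.11]{Extension} to pass from ``countably many idempotents'' to ``no infinite orthogonal family of idempotents,'' you prove the needed counting fact directly from the completeness of the summand lattice of a Baer module via the injection $T\mapsto E_T$ --- a correct, self-contained unpacking of that citation.
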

\begin{proof}
 Let $M$ be a Baer $R$-module with only countably many direct summands.  Thus $S=$ End$_R(M)$ is a Baer ring \cite[Theorem 4.2.8]{Extension} and it has  only countably many idempotents. This  implies that  $S$ has no infinite set of orthogonal idempotents  \cite[Theorem 3.1.11]{Extension}. Therefore  $M$ has descending  chain condition on its direct summands and so by Theorem \ref{sim.direct summand}, $M$ satisfies the SB property.
 \end{proof}

%%%%%%%%%%%%%%%%%%%%%%%%%%%%%%%%%%%%%%%%%%%%%%%%%%%%%%%%%%%%%

In the following some applications of our results are indicated by characterizing rings over which any pair of  subisomorphic Baer  modules are isomorphic.

%%%%%%%%%%%%%%%%%%%%%%%%%%%%%%%%%%%%%%%%%%%%%%%%%%%%%%%%%%%%%%%%%%%%%%%%%%%%%%%%%%%%%%%%%%%%%%%%%%%%%%%%%%%%%%%%%%%

\begin{prop}\label{Baer. subisomorphic}
Let $R$ be a ring and any pair of  subisomorphic Baer $R$-modules  are isomorphic.
Then  every  nonsingular extending  $R$-module is injective.
\end{prop}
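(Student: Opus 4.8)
The plan is to show that the given nonsingular extending module $M$ is isomorphic to a direct summand of an injective module, whence $M$ is injective. Write $E=E(M)$ for the injective hull of $M$. Since $M$ is nonsingular and $M\leq_{ess}E$, the module $E$ is nonsingular, and being injective it is extending, hence Baer by Theorem \ref{extending nonsingular}. Set $Q=E^{\Bbb{N}}=\prod_{i\in\Bbb{N}}E$. A product of injective modules is injective and a product of nonsingular modules is nonsingular, so $Q$ is a nonsingular injective module; in particular $Q$ is extending and therefore Baer by Theorem \ref{extending nonsingular}. Note also that $Q\oplus Q\simeq Q$ (reindexing $\Bbb{N}\sqcup\Bbb{N}$ with $\Bbb{N}$) and that $M\hookrightarrow E\hookrightarrow Q$.

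Next I would produce the subisomorphic pair $M\oplus Q$ and $Q$. On one hand $Q$ is a direct summand of $M\oplus Q$, so $Q$ embeds in $M\oplus Q$; on the other hand, using $M\hookrightarrow Q$ we obtain $M\oplus Q\hookrightarrow Q\oplus Q\simeq Q$. Thus $M\oplus Q$ and $Q$ are subisomorphic to each other. Granting that both are Baer, the hypothesis on $R$ yields $M\oplus Q\simeq Q$, and since $M$ is a direct summand of $M\oplus Q$ it is isomorphic to a direct summand of $Q$. A direct summand of an injective module is injective, so $M$ is injective, as desired.

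The step that requires real work, and which I expect to be the main obstacle, is verifying that $M\oplus Q$ is Baer. Since $M\oplus Q$ is nonsingular, by Theorem \ref{extending nonsingular} it is enough to prove that $M\oplus Q$ is extending. I would establish this through the standard direct-sum criterion for extending modules (see, e.g., \cite{Muller}): for a decomposition $Q\oplus M$ in which $Q$ is $M$-injective, the module is extending precisely when $M$ is extending and every closed submodule $C$ with $C\cap Q=0$ is a direct summand. The first condition is part of our hypothesis on $M$, and $Q$ is $M$-injective because it is injective. For the second condition, such a $C$ is mapped injectively by the projection onto $M$ (its kernel is $Q$), so $C$ is the graph of a homomorphism $h\colon D\to Q$ for some $D\leq M$. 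As $Q$ is injective, $h$ extends to $\tilde h\colon M\to Q$; its graph $G$ is a direct summand of $Q\oplus M$ with $G\simeq M$ and $C\leq G$. Since $C$ is closed in $Q\oplus M$ it is closed in the summand $G$, and $G\simeq M$ is extending, so $C\leq_{\oplus}G\leq_{\oplus}Q\oplus M$. Hence $M\oplus Q$ is extending, and therefore Baer, so the argument above applies and $M$ is injective.
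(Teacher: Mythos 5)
Your overall architecture coincides with the paper's: both proofs set $Q=E(M)^{\Bbb{N}}$, observe that $Q$ and $M\oplus Q$ are mutually subisomorphic Baer modules, deduce $M\oplus Q\simeq Q$ from the hypothesis, and conclude that $M$, being a direct summand of an injective module, is injective. The one point where the paper does not argue via the extending property is precisely the step you flag as the main obstacle: the paper certifies that $E(M)^{\Bbb{N}}\oplus M$ is Baer by citing \cite[Theorem 4.2.18]{Extension}, whereas you attempt to show that $M\oplus Q$ is extending and then apply Theorem \ref{extending nonsingular}. That step of yours contains a genuine gap.

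The ``standard direct-sum criterion'' you invoke is not a correct statement. The correct criterion (Harmanci--Smith; see Lemma 7.9 of Dung--Huynh--Smith--Wisbauer, \emph{Extending Modules}) is that $Q\oplus M$ is extending if and only if every closed submodule $C$ with $C\cap Q=0$ \emph{or} $C\cap M=0$ is a direct summand. Your graph argument correctly handles the first alternative (that is exactly where the $M$-injectivity of $Q$ and the extending property of $M$ enter), but the second alternative --- closed $C$ with $C\cap M=0$ and $C\cap Q\neq 0$ --- is omitted, and the same trick cannot dispose of it: such a $C$ is the graph of a homomorphism from a submodule of $Q$ into $M$, and extending that homomorphism would require $M$ to be $Q$-injective, which (since $M\leq_{ess}E(M)$ and $E(M)$ embeds in $Q$) already forces $M$ to be injective, i.e.\ the conclusion being proved. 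That the omitted case is a real obstruction and not a formality is shown by the $\Bbb{Z}$-module $\Bbb{Z}_{p^{\infty}}\oplus \Bbb{Z}_{p}$: here $\Bbb{Z}_{p^{\infty}}$ is injective, $\Bbb{Z}_{p}$ is extending, and every closed submodule meeting $\Bbb{Z}_{p^{\infty}}\oplus 0$ trivially is the graph of a map $\Bbb{Z}_{p}\rightarrow\Bbb{Z}_{p^{\infty}}$ and hence a direct summand, so all the hypotheses of your criterion are met; nevertheless the module is not extending (Theorem \ref{21}), the failure being witnessed by the closed submodule $\langle(x,1)\rangle\simeq\Bbb{Z}_{p^{2}}$ with $x$ of order $p^{2}$, which meets $0\oplus\Bbb{Z}_{p}$ trivially but $\Bbb{Z}_{p^{\infty}}\oplus 0$ nontrivially. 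Your modules are of course nonsingular, and the fact you need (that $E(M)^{\Bbb{N}}\oplus M$ is Baer) is true, but it requires a different justification --- the paper obtains it directly from \cite[Theorem 4.2.18]{Extension} without passing through the extending property --- so, as written, your verification that $M\oplus Q$ is extending is unsupported.
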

\begin{proof} Suppose that any two subisomorphic Baer $R$-modules  are isomorphic and $M$ is   any  nonsingular extending  $R$-module. Let $K=$E$(M)^{\Bbb{N}}\oplus M$ and $N=$ E$(M)^{\Bbb{N}}$. The $R$-module $N$ is extending and nonsingular module.  By Theorem \ref{extending nonsingular},   $N$ is a Baer module. Moreover  by   \cite[Theorem 4.2.18] {Extension}, $K$  is also  a Baer module. Clearly $N$ is a direct summand of $K$  and $K$ is isomorphic to a submodule of $N$. Thus by our assumption, $N\simeq K$ and so $M$ is an injective $R$-module.
\end{proof}

%%%%%%%%%%%%%%%%%%%%%%%%%%%%%%%%%%%%%%%%%%%%%%%%%%%%%%%%%%%%%%%%%%%%%%%%%%%%%%%%%%%%%%%%%%%%%%%%%%%%%%%%%555

We recall that an $R$-module $M$ {\it has finite unifrom dimension} if there exist  uniform submodules $U_{1}, U_{2},..., U_{n}$ of $M$ such that $\oplus_{i=1}^{n}U_{i}\leq_{ess}M$. It is well known that $M$ has finite uniform dimension if and only if $M$ does not contain any infinite direct sums. Clearly every uniform module  has finite uniform dimension.

%%%%%%%%%%%%%%%%%%%%%%%%%%%%%%%%%%%%%%%%%%%%%%%%%%%%%%%%%%%%%%%%%%%%%%%%%%%%%%%%%%%%%%%%%%%%

\begin{thm}\label{Z2}
Let $R$ be a ring such that $R_R$ has  finite uniform dimension. Consider the following conditions:\\
{\rm (a)} Any pair of  subisomorphic Baer $R$-modules are isomorphic;\\
{\rm (b)} Any pair of Baer $R$-modules $N$ and $K$ are isomorphic provided that $N$ is isomorphic to a submodule of $K$ and $K$ is isomorphic to a direct summand of $N$;\\
{\rm (c)} $R/{\rm Z}_{2}(R)$ is a semisimple Artinian ring.\\
Then (a) $\Rightarrow$ (b) $\Rightarrow$ (c).
\end{thm}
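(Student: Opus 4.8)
The implication (a) $\Rightarrow$ (b) is immediate: if $N$ is isomorphic to a submodule of $K$ and $K$ is isomorphic to a direct summand (hence to a submodule) of $N$, then $N$ and $K$ are subisomorphic to each other, so (a) forces $N\simeq K$. All the work is in (b) $\Rightarrow$ (c). The plan is to first extract from (b) exactly the conclusion of Proposition \ref{Baer. subisomorphic} --- that every nonsingular extending $R$-module is injective --- and then to use the finite uniform dimension hypothesis to force $T:=R/{\rm Z}_2(R)$ to coincide with its own injective hull.

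First I would show that (b) implies the property (P): \emph{every nonsingular extending $R$-module is injective}. Let $M$ be nonsingular extending; then E$(M)$ is nonsingular and injective, so the product E$(M)^{\Bbb{N}}$ is injective and nonsingular, hence extending, hence Baer by Theorem \ref{extending nonsingular}, and $M$ is Baer for the same reason. Put $N=$E$(M)^{\Bbb{N}}\oplus M$ and $K=$E$(M)^{\Bbb{N}}$. By \cite[Theorem 4.2.18]{Extension}, $N$ is Baer (this is precisely the direct sum appearing in Proposition \ref{Baer. subisomorphic}), and $K$ is Baer. Since $M\hookrightarrow$E$(M)$, we obtain $N\hookrightarrow$E$(M)^{\Bbb{N}}\oplus$E$(M)\simeq$E$(M)^{\Bbb{N}}=K$, so $N$ is isomorphic to a submodule of $K$, while $K$ is literally a direct summand of $N$. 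Thus the hypotheses of (b) hold, giving $N\simeq K$; hence $N$ is injective, and its direct summand $M$ is injective.

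Next I would deduce (c) from (P) and the finite uniform dimension. As noted in the excerpt, $T=R/{\rm Z}_2(R)$ is nonsingular as a right $R$-module, and since ${\rm Z}_2(R)$ is a closed submodule of $R_R$, additivity of uniform dimension over closed submodules shows that $T$ has finite uniform dimension. Decompose its injective hull as E$(T)=U_1\oplus\cdots\oplus U_m$ with each $U_j$ a uniform (indecomposable) nonsingular injective $R$-module. Because $T\leq_{ess}$E$(T)$, each $T_j:=T\cap U_j$ is nonzero and essential in $U_j$; moreover $T_j$ is a uniform nonsingular $R$-module, hence extending, hence injective by (P). An injective essential submodule of $U_j$ must equal $U_j$, so $U_j=T_j\subseteq T$ for every $j$. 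Therefore E$(T)=\bigoplus_j U_j\subseteq T$, which forces $T=$E$(T)$; that is, $T$ is injective as a right $R$-module.

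Finally I would pass to the ring statement. Since the $R$-submodule and $T$-submodule lattices of any $T$-module coincide, the $T$-injective hull of $T_T$ is an essential $R$-extension of $T$; as $T$ is already $R$-injective it admits no proper essential $R$-extension, so $T_T$ is self-injective. A right self-injective right nonsingular ring is von Neumann regular, and a von Neumann regular ring of finite right uniform dimension is semisimple Artinian, which is (c). The main obstacle is exactly this last passage: (P) only asserts that nonsingular \emph{extending} modules are injective, and $T$ itself need not be extending, so one cannot apply (P) to $T$ directly. The remedy is to intersect $T$ with the uniform summands of E$(T)$ and apply (P) to those uniform (hence extending) pieces; this is precisely where the finite uniform dimension hypothesis does its essential work, by guaranteeing the decomposition of E$(T)$ into finitely many uniform injectives.
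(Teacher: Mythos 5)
Your argument is correct. It follows the same two basic ideas as the paper's proof --- use the ${\rm E}(M)^{\Bbb{N}}\oplus M$ versus ${\rm E}(M)^{\Bbb{N}}$ construction of Proposition \ref{Baer. subisomorphic} to force injectivity of nonsingular extending modules, then exploit finite uniform dimension to split everything into uniform pieces and finish with ``right self-injective $+$ right nonsingular $\Rightarrow$ regular $\Rightarrow$ semisimple'' --- but it organizes the reduction modulo ${\rm Z}_2(R)$ differently. The paper passes to the quotient ring $R/{\rm Z}_2(R)$ at the outset, asserting (as ``routine'') that hypothesis (b) descends to $R/{\rm Z}_2(R)$-modules because Baerness and (d-)subisomorphism are the same over $R$ and over the quotient; it then applies the injectivity argument to uniform right ideals $U_1,\dots,U_n$ of the (now nonsingular) ring and concludes $R=U_1\oplus\dots\oplus U_n$ is self-injective. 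You instead stay over $R$ throughout: you prove property (P) for $R$-modules, decompose ${\rm E}\bigl((R/{\rm Z}_2(R))_R\bigr)$ into uniform injectives, apply (P) to the essential uniform pieces $T\cap U_j$ to get $T={\rm E}(T)$, and only then transfer injectivity to $T$ as a ring via the coincidence of submodule lattices. Your route buys you freedom from having to verify the descent of condition (b) to the quotient ring (the one step the paper leaves unjustified), at the cost of the extra lattice-transfer argument at the end; the paper's route is slightly shorter once that descent is granted. Both are valid, and your handling of the delicate point --- that (P) applies only to \emph{extending} nonsingular modules, so one must cut $T$ into uniform (hence extending) pieces rather than apply (P) to $T$ itself --- is exactly right and is also where the paper uses finite uniform dimension.
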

\begin{proof}
(a) $\Rightarrow$ (b). This  is clear.\\
(b) $\Rightarrow$ (c). Let $I=$ ${\rm Z}_{2}(R_R)$.  Since $R$ has finite uniform dimension and  $I$ is a closed submodule of $R_{R}$,  it is well known that  $R/I$ has finite uniform dimension as an  $R/I$ module.  In addition, Z$((R/I)_{R})=0$, then $R/I$ is a right  nonsingular ring. We also note  that if $M$ is an $R/I$-module, then $M$ is Baer as $R/I$-module  if and only if it is  Baer as $R$-module. It is routine to see that  the condition (b) holds  for $R/I$-modules. Thus without loss of generality we may assume that  $R$ is a right nonsingular ring.  By our assumption on $R$, there exist uniform right ideals $U_{1}, U_2,...,U_{n}$ ($n\geq 1$) of $R$, such that $U_{1}\oplus U_{2}\oplus...\oplus U_{n}\leq_{ess} R_R$. Since each $U_i$ is a nonsingular extending $R$-module,  by  the proof of Proposition \ref{Baer. subisomorphic} and the assumption (b),  each $U_{i}$ must be injective. Hence  $U_{1}\oplus U_{2}\oplus...\oplus U_{n}$ is an injective $R$-module and  so  $R=U_{1}\oplus U_{2}\oplus...\oplus U_{n}$ is a right self-injective ring. Therefore   Z$(R_R)=$ J$(R)$ and $R/$J$(R)$ is a   regular ring \cite[Proposition 3.15]{Muller}. Since Z$(R_R)=0$,  $R$ is a  regular ring. Therefore each $U_{i}$ is  a simple $R$-module and so  $R$ is a semisimple Artinian ring, as desired.
\end{proof}

%%%%%%%%%%%%%%%%%%%%%%%%%%%%%%%%%%%%%%%%%%%%%%%%%%%%%%%%%%%%%%%%%%%%%%%%%%%%%%%%%%%%%%%%%%%%%%%%%%%%%%%%%

\begin{cor}\label{nonsingular}
Let $R$ be a right nonsingular ring and  $R_R$  has    finite uniform dimension.  The following conditions are equivalent: \\
{\rm (a)} Any pair of  subisomorphic Baer $R$-modules are isomorphic;\\
{\rm (b)} Any pair of Baer $R$-modules $N$ and $K$ are isomorphic provided that $N$ is isomorphic to a submodule of $K$ and $K$ is isomorphic to a direct summand of $N$;\\
{\rm (c)} $R$ is a semisimple Artinian ring.
\end{cor}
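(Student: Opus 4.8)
The plan is to prove the three conditions equivalent by closing the cycle $(a) \Rightarrow (b) \Rightarrow (c) \Rightarrow (a)$, where the first two implications come essentially for free and all the genuine work lies in $(c) \Rightarrow (a)$. For $(a) \Rightarrow (b)$ the hypotheses of $(b)$ already force $N$ and $K$ to be subisomorphic to each other (a direct summand is in particular a submodule), so $(a)$ applies verbatim. For $(b) \Rightarrow (c)$, since $R$ is right nonsingular we have $\mathrm{Z}(R_R) = 0$, whence $\mathrm{Z}_2(R) = 0$ and $R/\mathrm{Z}_2(R) = R$; Theorem \ref{Z2}, whose only hypothesis is that $R_R$ has finite uniform dimension, then yields that $R$ is semisimple Artinian.

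The substance is $(c) \Rightarrow (a)$. Assuming $R$ is semisimple Artinian, every $R$-module is semisimple, and I must show that any two subisomorphic (Baer) modules $N$ and $K$ are isomorphic. Writing $S_1, \dots, S_k$ for a complete list of the pairwise nonisomorphic simple right $R$-modules, each module $M$ decomposes as $M \simeq \bigoplus_{j=1}^{k} S_j^{(\alpha_j(M))}$ for uniquely determined cardinals $\alpha_j(M)$, and two semisimple modules are isomorphic exactly when these invariants coincide. It therefore suffices to prove $\alpha_j(N) = \alpha_j(K)$ for every $j$.

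The key step is to convert each embedding into an inequality of these cardinal invariants. Fix a monomorphism $f : N \hookrightarrow K$. The $S_j$-homogeneous component of a module (the sum of all simple submodules isomorphic to $S_j$) is fully invariant, so $f$ carries the $S_j$-component of $N$ injectively into that of $K$, giving a monomorphism $S_j^{(\alpha_j(N))} \hookrightarrow S_j^{(\alpha_j(K))}$. By Schur's Lemma $\mathrm{End}_R(S_j)$ is a division ring $D_j$, and $S_j^{(\alpha)}$ is naturally a $D_j$-vector space of dimension $\alpha$ on which $R$-homomorphisms act $D_j$-linearly; an injection of such spaces forces $\alpha_j(N) \le \alpha_j(K)$. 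Applying the same reasoning to an embedding $K \hookrightarrow N$ gives the reverse inequality, and the Schr\"{o}der--Bernstein theorem for cardinals then delivers $\alpha_j(N) = \alpha_j(K)$, hence $N \simeq K$.

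The main obstacle I anticipate is the bookkeeping in this final step: precisely, justifying that an $R$-embedding respects the homogeneous components and that the induced map on each component is faithfully measured by $D_j$-dimension, so that the cardinal comparison is legitimate. Once this is in place, the conclusion reduces to the classical Cantor--Schr\"{o}der--Bernstein for cardinals, which is fitting given that the entire paper concerns module-theoretic analogues of that theorem.
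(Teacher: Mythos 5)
Your proposal is correct and follows the same route as the paper: the paper also derives (a)$\Rightarrow$(b)$\Rightarrow$(c) from Theorem \ref{Z2} together with $\mathrm{Z}_2(R)=0$, and closes the cycle by citing (without proof) the fact that subisomorphic semisimple modules are isomorphic, which is exactly the fact you prove in detail. The only quibble is in your bookkeeping for (c)$\Rightarrow$(a): the multiplicity $\alpha_j(M)$ is cleanly extracted as $\dim_{D_j}\mathrm{Hom}_R(S_j,M)$, and a monomorphism $N\hookrightarrow K$ induces an injective $D_j$-linear map of these Hom-spaces, which is the precise way to get $\alpha_j(N)\le\alpha_j(K)$ (viewing $S_j^{(\alpha)}$ itself as a $D_j$-space of dimension $\alpha$ is not quite accurate, since an $R$-homomorphism between such sums need not be $D_j$-linear for the componentwise action).
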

\begin{proof}
It follows   from  Theorem  \ref{Z2} and the fact that any pair of semisimple subisomorphic modules are isomorphic.
\end{proof}

%%%%%%%%%%%%%%%%%%%%%%%%%%%%%%%%%%%%%%%%%%%%%%%%%%%%%%%%%%%%%%%%%%%%%%%%%%%%%%%%%%%%%%%%%%%%%%%%%%%%%
We remind that by Bumby's Theorem, any two subisomorphic  injective modules  are  isomorphic to each other \cite[Theorem]{bumby}.  Therefore if every Baer module is injective then any two subisomorphic Baer modules are isomorphic to each other.
So  in the following,  we investigate the stronger case: ``when    every Baer $R$-module  is injective".\\
We recall that a ring $R$ is called {\it a right  V-ring} if every simple $R$-module is injective.

%%%%%%%%%%%%%%%%%%%%%%%%%%%%%%%%%%%%%%%%%%%%%%%%%%%%%%%%%%%%%%%%%%%%%%%%%%%%%%%%%%%%%%%%%%%%%%%%%%%%

\begin{thm}\label{all Baer is inj}
Let $R$ be a ring. The following are equivalent: \\
{\rm (a)} Every Baer $R$-module is injective; \\
%{\rm (b)} Every quasi-continuous $R$-module is injective.\\
%{\rm (c)} $R$ is a right Noetherian ring and every uniform $R$-module is injective.\\
{\rm (b)} $R$ is a semisimple Artinian ring.
\end{thm}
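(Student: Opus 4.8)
The plan is to dispose of the easy implication immediately and then concentrate all the work on $(a)\Rightarrow(b)$. For $(b)\Rightarrow(a)$: if $R$ is semisimple Artinian then \emph{every} right $R$-module is injective, so in particular every Baer module is injective; nothing about the Baer condition is used here.

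For $(a)\Rightarrow(b)$ the first move I would make is to manufacture a large supply of Baer modules and feed them to the hypothesis. The key observation is that every semisimple module $M$ is Baer: its only essential submodule is $M$ itself, so $M$ is $\mathcal{K}$-nonsingular, and every submodule of $M$ is a direct summand, so $M$ is trivially extending; hence $M$ is Baer by Theorem \ref{extending nonsingular}. By $(a)$ every semisimple $R$-module is therefore injective. I would record two consequences: every simple module is injective, so $R$ is a right $V$-ring (in particular $\mathrm{J}(R)=0$); and the socle of \emph{any} module, being semisimple hence injective, splits off as a direct summand, so the goal reduces to writing $R_R=\mathrm{Soc}(R_R)\oplus L$ and proving $L=0$.

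The crux is to upgrade this $V$-ring conclusion to semisimplicity, and here I expect the main obstacle. The $V$-ring property alone is \emph{not} enough: a Cozzens domain is a right $V$-ring over which every semisimple module is injective, yet it is not semisimple Artinian. What defeats this test case is that $(a)$ must also be applied to \emph{non}-semisimple Baer modules. Since every nonsingular extending module is Baer by Theorem \ref{extending nonsingular}, $(a)$ forces every nonsingular extending $R$-module to be injective — exactly the conclusion that Proposition \ref{Baer. subisomorphic} extracts from a weaker hypothesis, now available outright. I would use this to force $R_R$ to be right nonsingular and of finite uniform dimension: a uniform nonsingular module is extending, hence injective, and an infinite independent family of uniform nonsingular right ideals should yield, via the direct-sum Baer machinery used in Proposition \ref{Baer. subisomorphic} (cf. \cite[Theorem 4.2.18]{Extension}), a Baer module that fails to be injective, contradicting $(a)$. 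Producing this finiteness \emph{and} the vanishing of the second singular submodule simultaneously — ruling out precisely the Cozzens-type behaviour — is the delicate point I anticipate spending most effort on.

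Once $R$ is known to be right nonsingular with finite uniform dimension, the argument closes quickly. Under $(a)$, Bumby's theorem \cite{bumby} guarantees that any two subisomorphic injective modules are isomorphic, so any two subisomorphic Baer modules (each injective by $(a)$) are isomorphic; this is precisely hypothesis $(a)$ of Theorem \ref{Z2} and of Corollary \ref{nonsingular}. Applying the implication $(a)\Rightarrow(c)$ of Corollary \ref{nonsingular} then yields that $R$ is semisimple Artinian, completing $(a)\Rightarrow(b)$. If instead one reaches only finite uniform dimension first, Theorem \ref{Z2} gives that $R/\mathrm{Z}_2(R)$ is semisimple Artinian, and I would finish by using the $V$-ring property together with the splitting of socles to show $\mathrm{Z}_2(R)=0$.
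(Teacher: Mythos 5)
Your overall skeleton matches the paper's: semisimple modules are $\mathcal{K}$-nonsingular and extending, hence Baer, so $(a)$ makes $R$ a right $V$-ring; and the endgame is to verify the hypotheses of Corollary \ref{nonsingular} and invoke it via Bumby's theorem. You also correctly diagnose that the $V$-ring conclusion alone cannot suffice (the Cozzens example) and that the real work is to establish that $R$ is right nonsingular with finite right uniform dimension. But that is exactly the step you leave undone, and your sketch of it does not work: failure of finite uniform dimension gives an infinite independent family of nonzero right ideals, not of \emph{uniform nonsingular} ones, so there is no supply of uniform nonsingular right ideals to feed into the direct-sum Baer machinery; and you offer no argument at all for ${\rm Z}(R_R)=0$ (or ${\rm Z}_2(R)=0$), only the statement that producing it is ``delicate.'' A proof cannot rest there. (Incidentally, your framing of this step as ``ruling out Cozzens-type behaviour'' is off: a Cozzens domain is already right nonsingular of uniform dimension one, so it is eliminated only at the final appeal to Corollary \ref{nonsingular}, not at this stage.)

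The paper closes the gap as follows. Since $R$ is a right $V$-ring, for any family $\{T_i\}$ of simple modules one has $\oplus_i {\rm E}(T_i)=\oplus_i T_i$, which is semisimple, hence Baer, hence injective by $(a)$. By the standard characterization that $R$ is right Noetherian if and only if every direct sum of cocyclic injective modules (equivalently, of injective hulls of simples) is injective, $R$ is right Noetherian. A right Noetherian right $V$-ring is semiprime right Goldie, and therefore right nonsingular with finite right uniform dimension; Corollary \ref{nonsingular} then yields that $R$ is semisimple Artinian. This Noetherian detour through cocyclic injectives is the missing idea in your proposal.
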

\begin{proof}
 (a) $\Rightarrow$ (b). First we claim  that $R$ is a right Noetherian right V-ring provided that
 every semisimple $R$-module is injective. To see this, assume that every semisimple $R$-module is injective. Clearly $R$ is a right V-ring.  It is well known that a ring $R$  is  right Noetherian  if and only if  any arbitrary direct sum of cocyclic injective $R$-modules is injective. %\cite[27.3]{Foundation}.
 We note that cocyclic injective $R$-modules  are  precisely injective envelope of  simple $R$-modules.
 Let $\{T_{i}\}_{i\in I}$ be a family of simple $R$-modules. Since $R$ is a  right V-ring,  $\oplus_{i\in I}$E$(T_{i})=\oplus_{i\in I}T_{i}$  is  semisimple and so by our assumption is an injective $R$-module. Thus $R$ is right Notherian. \\
  Now suppose that every Baer $R$-module is injective. Thus   every semisimple $R$-module is  injective and then  by the above note,  $R$ is a right Noetherian right V-ring. Therefore $R$ is  a semiprime right Goldie ring and so  $R$ is right nonsingular.  Since every Baer $R$-module is injective, then any pair of  subisomorphic  Baer $R$-modules are  isomorphic to each other. Now by Corollary \ref{nonsingular}, $R$ is a semisimple Artinian ring.\\
  (b) $\Rightarrow$ (a). It is obvious.
%(d) $\Rightarrow$ (c). This is clear.\\
%(c) $\Rightarrow$ (b).  Let $M$ be a quasi-continuous $R$-module. It is well known that over a right Noetherian ring, every injective module is a direct sum of uniform modules. Thus  E$(M)=\oplus_{i\in I} E_{i}$ where each $E_{i}$ is a  uniform $R$-module. Since $M$ is quasi-continuous, $M=\oplus_{i\in I} M\cap E_{i}$  \cite[Theorem 2.8]{Muller}. We note that   $M\cap E_{i}$ is uniform, for each $i\in I$.  Therefore    each $M\cap E_{i}$ is injective.  Thus  $M=\oplus_{i\in I}M\cap E_{i}$ is an injective $R$-module because $R$ is right Noetherian. \\
%(b) $\Rightarrow$ (a). By our assumption every semisimple $R$-module is injective and so $R$ is a right Noetherian ring. Thus every nonzero cyclic $R$-module contains a uniform submodule. We claim that every $R$-module is injective. Let $M$ be an arbitrary module. Thus by Zorn's Lemma, there is a maximal independent family $\{U_{i}\}_{i\in I}$ of uniform submodules of $M$ such that $\oplus_{i\in I}U_{i}\leq_{ess} M$. By  (b),  each $U_{i}$ is an injective $R$-module. Thus $\oplus_{i\in I}U_{i}$ is injective because $R$ is right Noetherian. Therefore $M=\oplus_{i\in I} U_{i}$ is an injective $R$-module.
\end{proof}

%%%%%%%%%%%%%%%%%%%%%%%%%%%%%%%%%%%%%%%%%%%%%%%%%%%%%%%%%%%%%%%%%%%%%%%%%%%%%%%%%%%%%%%%%%%%%%%%%%%%%%%%%

By Theorem \ref{extending nonsingular},  every extending  nonsingular module is  Baer. Therefore  the question  ``{\it when any pair of  subisomorphic extending modules are isomorphic to each other}" is natural.  In  Example \ref{Baer.subisomorphic},  we found two subisomorphic nonsingular extending $\Bbb{Z}$-modules $N$ and $K$ which  are not isomorphic to each other. While over a right Noetherian ring $R$, any mutually d-subisomorphic extending $R$-modules are isomorphic to each other \cite[Theorem 2.12]{S.B}.\\
In the next,  we show  that the study  of subisomorphic extending modules leads to the study of subisomorphic singular extending modules and subisomorphic nonsingular extending modules.

%%%%%%%%%%%%%%%%%%%%%%%%%%%%%%%%%%%%%%%%%%%%%%%%%%%%%%%%%%%%%%%%%%%%%%%%%%%%%%%%%%%%%%%%%%%%%%%%%%%%%%

\begin{thm}\label{separate extendings}
Any two (d-)subisomorphic extending $R$-modules are isomorphic to each other if and only if any pair of (d-)subisomorphic extending singular $R$-modules are isomorphic to each other and any pair of (d-)subisomorphic extending nonsingular $R$-modules are isomorphic.
\end{thm}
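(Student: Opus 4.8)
The plan is to prove the nontrivial ("if") implication by splitting each extending module into a singular and a nonsingular summand through the second singular submodule, and then checking that any (d-)subisomorphism respects this splitting. The "only if" direction is immediate: a singular (resp.\ nonsingular) extending module is in particular an extending module, so if all (d-)subisomorphic extending modules are isomorphic, then the two restricted statements follow at once.

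For the converse I would first record the canonical decomposition of an extending module $M$. Since $M/{\rm Z}_2(M)$ is nonsingular, ${\rm Z}_2(M)$ is a closed submodule of $M$, so the $C_1$ condition forces ${\rm Z}_2(M)\leq_{\oplus}M$; write $M={\rm Z}_2(M)\oplus M'$. Here ${\rm Z}_2(M)$ is singular, being an extension of the singular module ${\rm Z}(M)$ by the singular module ${\rm Z}_2(M)/{\rm Z}(M)$, while $M'\simeq M/{\rm Z}_2(M)$ is nonsingular. Moreover a direct summand of an extending module is again extending, so both ${\rm Z}_2(M)$ and $M'$ are extending. Thus every extending module splits canonically as (singular extending) $\oplus$ (nonsingular extending).

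The heart of the proof is to transfer a (d-)subisomorphism $M\leftrightarrow N$ to these two summands, using that ${\rm Z}_2(-)$ is fully invariant and commutes with finite direct sums. In the d-subisomorphic case, $M\simeq N'$ with $N=N'\oplus N''$ gives ${\rm Z}_2(N)={\rm Z}_2(N')\oplus{\rm Z}_2(N'')$ and $N/{\rm Z}_2(N)\simeq N'/{\rm Z}_2(N')\oplus N''/{\rm Z}_2(N'')$; hence ${\rm Z}_2(M)\simeq{\rm Z}_2(N')$ is isomorphic to a direct summand of ${\rm Z}_2(N)$, and likewise the nonsingular part of $M$ is isomorphic to a direct summand of that of $N$. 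Running the symmetric inclusion shows the singular parts are d-subisomorphic and the nonsingular parts are d-subisomorphic. In the (non-split) subisomorphic case, a monomorphism $f\colon M\to N$ restricts by full invariance to a monomorphism ${\rm Z}_2(M)\to{\rm Z}_2(N)$, while the composite $M'\hookrightarrow M\xrightarrow{f}N\twoheadrightarrow N/{\rm Z}_2(N)$ is a monomorphism on the nonsingular part: if $x\in M'$ has $f(x)\in{\rm Z}_2(N)$ then $Rx\simeq f(Rx)$ is simultaneously singular and nonsingular, hence $x=0$. Applying the two hypotheses to the resulting singular and nonsingular pairs yields ${\rm Z}_2(M)\simeq{\rm Z}_2(N)$ and $M'\simeq N'$, and summing these isomorphisms gives $M\simeq N$.

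I expect the main obstacle to lie in the non-split subisomorphic case: one must check that projecting the monomorphism onto the nonsingular quotient preserves injectivity (this is exactly the "a module that is both singular and nonsingular is zero" step), and one must ensure that the chosen complements $M'$ and $N'$ are genuinely isomorphic to the intrinsic quotients $M/{\rm Z}_2(M)$ and $N/{\rm Z}_2(N)$, so that the two hypotheses are applied to canonical invariants rather than to an arbitrary choice of splitting.
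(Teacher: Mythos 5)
Your proposal is correct and follows essentially the same route as the paper: split each extending module as ${\rm Z}_2(M)\oplus M'$ (the paper quotes this decomposition, you derive it from $C_1$ and the closedness of ${\rm Z}_2(M)$), then transfer the (d-)subisomorphism to the singular and nonsingular parts via the full invariance of ${\rm Z}_2$ and the fact that no nonzero module is both singular and nonsingular. If anything, your use of the projection $N\twoheadrightarrow N/{\rm Z}_2(N)$ to get the monomorphism on nonsingular parts is slightly cleaner than the paper's intersection argument, but it is the same idea.
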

\begin{proof}
First we note that  $M$ is an extending $R$-module if and only if  $M={\rm Z}_{2}(M)\oplus M_{1}$ where  ${\rm {Z}}_{2}(M)$ and $M_{1}$ are extending  submodules  and ${\rm Z}_{2}(M)$ is $M_1$-injective.
Suppose that any pair of subisomorphic singular extending $R$-modules are isomorphic to each other and  it does hold  true  for the class of  nonsingular extending $R$-modules.
Let $X$ and $Y$ be subisomorphic extending $R$-modules. Without loss of generality, we may assume that $Y\leq X$ and $\theta:X\rightarrow Y$ is an $R$-monomorphism. Moreover, $X={\rm Z}_{2}(X)\oplus X_{1}$ and $Y={\rm Z}_{2}(Y)\oplus Y_1$ where $X_{1}\leq X$ and $Y_1 \leq Y$. It is clear that ${\rm Z}_{2}(Y)\leq {Z_{2}}_(X)$ and $\theta({\rm Z}_{2}(X))\leq {\rm Z}_{2}(Y)$. Therefore by our assumption, ${\rm Z}_{2}(X)\simeq {\rm Z}_{2}(Y)$. On the other hand, since $Y_{1}  \cap  {\rm Z}_{2}(X)=0$ and $\theta(X_{1}) \cap  {\rm Z}_{2}(Y)=0$, then $Y_{1}\leq X_1$ and $X_{1}\simeq \theta(X_{1})\leq Y_{1}$. Thus $X_{1}$and $Y_{1}$ are nonsingular extending $R$-modules which are subisomorphic to each other. Therefore by our assumption $X_{1}\simeq Y_{1}$. Hence $X\simeq Y$.  The converse  is clear. \\
The proof of the case of d-subisomorphism is similar.
\end{proof}

%%%%%%%%%%%%%%%%%%%%%%%%%%%%%%%%%%%%%%%%%%%%%%%%%%%%%%%%%%%%%%%%%%%%%%%%%%%%%%%%%%%%%%%%%%%%%%%%%%%%%%%%%%%%555
\begin{thm}\label{extenging.nonsingular.charac}
Let $R$ be a ring. The following statements are equivalent:\\
{\rm (a)} For any $R$-module $Y$ and any  nonsingular extending (injective) $R$-module $X$, if $X$ and $Y$ are subiomorphic to each other then $X\simeq Y$; \\
{\rm (b)} Any pair of nonsingular subisomorphic $R$-modules are isomorphic to each other;\\
{\rm (c)} $R/{\rm Z}_{2}(R)$ is a semisimple Artinian ring.
\end{thm}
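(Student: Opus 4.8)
I would prove this as a cycle of implications, (a) $\Rightarrow$ (b) $\Rightarrow$ (c) $\Rightarrow$ (a), exploiting the fact that over the quotient by the second singular submodule the singular behaviour is stripped away and nonsingularity becomes tractable. The plan is to show first that (a), phrased in terms of nonsingular extending (or injective) modules, already forces (b) about arbitrary nonsingular modules, then that (b) yields the ring-theoretic conclusion (c), and finally that semisimplicity over $R/\mathrm{Z}_2(R)$ is strong enough to recover the general isomorphism statement in (a).

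For (a) $\Rightarrow$ (b), I would start with two nonsingular modules $X$ and $Y$ that are subisomorphic to each other and pass to their injective hulls $\mathrm{E}(X)$ and $\mathrm{E}(Y)$, which are nonsingular (hence $\mathcal{K}$-nonsingular) injective, and so Baer by Theorem \ref{extending nonsingular}. The idea is to manufacture, from the given subisomorphisms, a pair of injective nonsingular modules to which hypothesis (a) applies, deduce their isomorphism, and then descend to an isomorphism $X \simeq Y$ using that the monomorphisms between nonsingular modules extend to their hulls. The main technical obstacle I anticipate is exactly this descent step: knowing the injective hulls are isomorphic does not by itself give $X \simeq Y$, so I would need to control the images carefully, most plausibly by building the subisomorphisms so that $X$ sits as a direct summand (not merely a submodule) after passing to hulls, and invoking Bumby's theorem \cite{bumby} together with the extension property of injectives to transfer the splitting back down.

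For (b) $\Rightarrow$ (c), I would reduce modulo $I = \mathrm{Z}_2(R_R)$ and work over the right nonsingular ring $R/I$; every $R/I$-module is then nonsingular, so (b) applies to all of them. The strategy mirrors the proof of Theorem \ref{Z2}: for each uniform right ideal I would force injectivity by the argument of Proposition \ref{Baer. subisomorphic}, conclude $R/I$ is right self-injective, and then via J$(R/I) = \mathrm{Z}(R/I) = 0$ deduce $R/I$ is von Neumann regular and in fact semisimple Artinian. Here I would need to verify that (b) indeed transfers to $R/I$-modules, which should be routine since Baer-ness and nonsingularity are both preserved along $R \to R/I$ on the relevant modules.

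For (c) $\Rightarrow$ (a), with $R/\mathrm{Z}_2(R)$ semisimple Artinian, a nonsingular module is a module over this semisimple quotient and hence a semisimple (in particular injective and completely reducible) module; two subisomorphic semisimple modules are isomorphic by comparing the multiplicities of each simple summand, so the nonsingular half is immediate. The remaining work is to handle a general $Y$ subisomorphic to a nonsingular $X$: since $X$ is nonsingular and $Y$ embeds in $X$ up to a summand, $Y$ must itself be nonsingular, reducing (a) to the nonsingular case already settled. I expect the genuine difficulty of the whole theorem to lie in the (a) $\Rightarrow$ (b) descent argument; the other two implications should follow the template of Theorem \ref{Z2} and Corollary \ref{nonsingular} with only bookkeeping adjustments.
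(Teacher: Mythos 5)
There is a genuine gap, and it sits exactly where you predicted: your implication (a) $\Rightarrow$ (b) does not go through. Passing from nonsingular subisomorphic modules $X$ and $Y$ to their injective hulls and invoking Bumby only yields ${\rm E}(X)\simeq {\rm E}(Y)$, and there is no general descent from an isomorphism of hulls to an isomorphism of the modules themselves: for instance $R$ and a nonprincipal ideal $I$ of a Dedekind domain are mutually subisomorphic nonsingular modules with isomorphic injective hulls, yet $R\not\simeq I$. No amount of ``controlling the images'' or ``transferring the splitting back down'' repairs this, because the hypothesis (a) only speaks about pairs in which one member is already extending/injective, and after taking hulls you have lost all information about how $X$ and $Y$ sit inside them. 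The paper sidesteps the problem by reordering the cycle as (a) $\Rightarrow$ (c) $\Rightarrow$ (b) $\Rightarrow$ (a): from (a) one takes an arbitrary nonsingular $M$ and applies the Eilenberg swindle to $X={\rm E}(M)^{\Bbb{N}}$ and $Y={\rm E}(M)^{\Bbb{N}}\oplus M$ (both nonsingular, $X$ injective, mutually subisomorphic) to conclude $M$ is injective; hence every nonsingular module is injective, which is equivalent to (c). Then (c) $\Rightarrow$ (b) is immediate from Bumby's theorem, since under (c) every nonsingular module is injective.

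Two secondary problems: your (b) $\Rightarrow$ (c) is modelled on Theorem \ref{Z2}, whose argument (an essential finite direct sum of uniform right ideals) requires $R_R$ to have finite uniform dimension --- a hypothesis absent here; and the claim that every $R/{\rm Z}_2(R)$-module is nonsingular is false (e.g.\ $\Bbb{Z}/2\Bbb{Z}$ over $\Bbb{Z}$). The correct route for (b) $\Rightarrow$ (c) is again the swindle above, which uses (b) only for the specific nonsingular pair $({\rm E}(M)^{\Bbb{N}},\,{\rm E}(M)^{\Bbb{N}}\oplus M)$. Your (c) $\Rightarrow$ (a) is essentially sound, though the assertion that a nonsingular module is a module over $R/{\rm Z}_2(R)$ deserves the short verification that $M{\rm Z}_2(R)=0$ when ${\rm Z}(M)=0$; the reduction of a general $Y$ subisomorphic to a nonsingular $X$ to the nonsingular case (submodules of nonsingular modules are nonsingular) matches the paper's (b) $\Rightarrow$ (a).
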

\begin{proof}
First we note that  $R/{\rm Z}_{2}(R)$ is a semisimple Artinian ring if and only if every nonsingular $R$-module is injective \cite[Theorem 3.5]{S.B}.\\
(a) $\Rightarrow$ (c). Let  (a) holds true.   Suppose that  $M$ is any  nonsingular $R$-module and   $Y:={{\rm E}(M)}^{\Bbb{N}}\oplus M$ and $X:={{\rm E}(M)}^{\Bbb{N}}$.  It is clear that $X$,   $Y$ are subisomorphic nonsingular $R$-modules and  $X$ is  injective (and so extending).  Thus  by  our assumption,  $X\simeq Y$  and then     $M$ is an injective $R$-module.  Therefore   every nonsingular $R$-module is injective and so   $R/{\rm Z}_{2}(R)$ is  a semisimple Artinian ring.\\
(c) $\Rightarrow$ (b).  It follows by the above note and Bumby's Theorem. \\
(b) $\Rightarrow$ (a). We  note that the class of nonsingular modules is closed  under  taking  submodules. 
\end{proof}

%%%%%%%%%%%%%%%%%%%%%%%%%%%%%%%%%%%%%%%%%%%%%%%%%%%%%%%%%%%%%%%%%%%%%%%%%%%%%%%%%%%%%%%%%%%%%%%%%%%%

In \cite[Theorem 4.7]{S.B}, it has been shown that {``\it over a commutative domain $R$,  any two subisomorphic uniform $R$-modules  are isomorphic to each other if and only if $R$ is  a PID"}. We note that every uniform module is extending. In the following, we characterize a commutative domain  $R$ over which any pair of  subisomorphic extending $R$-modules are isomorphic to each other.
First we include  some results from  \cite{extending over comm. domain} and \cite{structure}  that characterize the structure of extending modules over Dedekind domains.\\

Let $M$ be  an $R$-module. We recall that  $M$ is said to be  {\it  prime} if $M$ is fully faithful as module over $R/{\rm ann}_{R}(M)$. It is routine to check that the annihilator of every prime $R$-module $M$ is a  prime ideal. {\it An associated prime of $M$} is any  prime ideal  $P$ of $R$ which equals the annihilator
of some prime submodule $N$ of $M$.  The set of all associated primes of $M$ is denoted by {\it  Ass$(M)$}. For any $R$-module $M$, ${\rm Ass}(M)\neq \emptyset$ provided that $R$ is right Noetherian. It is well known that over a right Noetherian ring $R$,  any uniform module has unique associated prime. For more details see \cite[Chapters 3 and 5]{Goodearl}.

%%%%%%%%%%%%%%%%%%%%%%%%%%%%%%%%%%%%%%%%%%%%%%%%%%%%%%%%%%%%%%%%%%%%%%%%%%%%%%%%%%%%%%%%%%%%

\begin{thm}\cite[Theorem 18]{structure}\label{structure}
Let $R$ be a commutative Noetherian ring. An $R$-module $M$ is extending if and only if $M=\oplus_{P}M(P)$ (unique up to isomorphism) where $M(P)$ has
associated prime $P$, is extending and is $M(Q)$-injective for all $Q\neq P$.
\end{thm}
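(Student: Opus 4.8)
The plan is to prove the two directions separately, transporting the Matlis decomposition of the injective hull $\mathrm{E}(M)$ down to $M$ itself through the extending property. For the forward (``only if'') direction I would begin with the fact that over a commutative Noetherian ring every injective module is a direct sum of indecomposable injectives $\mathrm{E}(R/P)$, $P$ prime; grouping these by prime gives a canonical decomposition $\mathrm{E}(M)=\bigoplus_{P}E_P$ with $E_P\cong\mathrm{E}(R/P)^{(\alpha_P)}$ the $P$-primary summand. The transfer tool is the standard fact that for an extending $M$, every summand decomposition $\mathrm{E}(M)=A\oplus B$ induces $M=(M\cap A)\oplus(M\cap B)$, because $M\cap A$ is closed in $M$ and hence a direct summand. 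Applying this with $A=E_P$ and $B=\bigoplus_{Q\neq P}E_Q$ and writing $M(P):=M\cap E_P$, I would deduce $M=\bigoplus_P M(P)$: directness is inherited from the internal directness of the $E_P$ in $\mathrm{E}(M)$, while every $m\in M\subseteq\bigoplus_P E_P$ has only finitely many nonzero coordinates, each lying in the matching $M(P)$ by uniqueness of coordinates together with the two-term splitting $M=M(P)\oplus\bigl(M\cap\bigoplus_{Q\neq P}E_Q\bigr)$. Since $M\leq_{ess}\mathrm{E}(M)$ forces $M(P)\leq_{ess}E_P$, I get $\mathrm{Ass}(M(P))=\{P\}$, and $M(P)$ is extending as a direct summand of the extending module $M$.

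The technical heart, and the step I expect to be the main obstacle, is to show that $M(P)$ is $M(Q)$-injective for every $Q\neq P$. This genuinely uses that $M$ (not merely the decomposition) is extending: the example $\Bbb{Q}\oplus\Bbb{Z}/p\Bbb{Z}$ over $\Bbb{Z}$ shows that two extending modules with disjoint associated primes need be neither mutually relatively injective nor have an extending direct sum, so the relative injectivity is a real constraint forced on the primary pieces. My plan is to use the complement form of relative injectivity: $M(P)$ is $B$-injective (with $B=\bigoplus_{Q\neq P}M(Q)$) precisely when every $C\leq M(P)\oplus B$ with $C\cap M(P)=0$ lies in a direct complement of $M(P)$. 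Given such $C$, the extending property of $M$ supplies a complement $D\supseteq C$ of $M(P)$ that is a direct summand, say $M=D\oplus D'$; projection along $D$ embeds $M(P)$ as an essential submodule of $D'$, whence $\mathrm{E}(D')\cong\mathrm{E}(M(P))\cong\mathrm{E}(R/P)^{(\alpha_P)}$ and so $\mathrm{Ass}(D')=\{P\}$. The crux is to upgrade this essential embedding to an equality $M(P)\cong D'$, i.e. to rule out $D'$ being strictly larger; here I would invoke the associated-prime and support bookkeeping available over a Noetherian ring to argue that a summand of $M=M(P)\oplus B$ whose only associated prime is $P$ is carried isomorphically onto $M(P)$, yielding $M=M(P)\oplus D$ and the desired extension. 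Verifying this final comparison carefully is where the real work lies.

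For the converse (``if'') direction I would invoke the companion principle that a direct sum of extending modules which are pairwise relatively injective is again extending, provided the requisite finiteness is present: each element of $\bigoplus_P M(P)$ meets only finitely many summands, and over a Noetherian ring the closure of a cyclic submodule stays within finitely many primary components, so the finite-sum version of the relative-injectivity criterion propagates to the whole direct sum. Finally, uniqueness up to isomorphism follows from the uniqueness in Matlis' theorem: the multiplicities $\alpha_P$ of $\mathrm{E}(R/P)$ in $\mathrm{E}(M)=\bigoplus_P\mathrm{E}(M(P))$ are invariants of $M$, and $M(P)$ is recovered intrinsically as the sum of all $P$-coprimary submodules of $M$, so any two decompositions of the asserted form have isomorphic corresponding components.
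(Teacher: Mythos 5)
This theorem is imported by the paper from Kamal--M\"{u}ller \cite{structure} without proof, so there is no internal argument to compare yours against; judged on its own terms, your outline has a genuine gap at its very first step. The ``standard fact'' you use to transport the Matlis decomposition --- that for an extending module $M$ every decomposition $\mathrm{E}(M)=A\oplus B$ induces $M=(M\cap A)\oplus(M\cap B)$ --- is false: this transfer property is precisely the characterization of \emph{quasi-continuous} modules (see \cite[Theorem 2.8]{Muller}), a strictly smaller class than extending modules. The extending property only yields that $M\cap A$ is closed, hence $M=(M\cap A)\oplus D$ for \emph{some} complement $D$; identifying $D$ with $M\cap B$ is a $C_3$-type assertion that extending modules need not satisfy. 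Concretely, $M=\Bbb{Z}_{p}\oplus\Bbb{Z}_{p^{2}}$ is extending (Theorem \ref{21}) but not quasi-continuous: for the idempotent $(a,b)\mapsto(\tau b,b)$ of $\mathrm{End}(\mathrm{E}(M))$ with $\tau$ a unit of the $p$-adic integers, the induced decomposition $\mathrm{E}(M)=A\oplus B$ gives $(M\cap A)\oplus(M\cap B)\simeq\Bbb{Z}_{p}\oplus\Bbb{Z}_{p}\subsetneq M$. Your decomposition is ultimately salvageable only because the $E_P$ belong to \emph{distinct} primes, but that is exactly what must be exploited (vanishing of the relevant homomorphisms between subquotients supported at different primes), and your argument never uses it.

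The same obstruction resurfaces in the relative-injectivity step: having produced a closed complement $D$ of $M(P)$ containing $C$ which is a direct summand, you still need $M=M(P)\oplus D$, i.e.\ the sum of two independent direct summands to be a summand, and your proposed repair --- that a summand $D'$ with $\mathrm{Ass}(D')=\{P\}$ is ``carried isomorphically onto $M(P)$'' --- is unproved and not true at this level of generality (an essential monomorphism between modules with the same associated prime need not be onto, e.g.\ $\Bbb{Z}\hookrightarrow\Bbb{Z}_{(p)}$). You rightly flag this as where the real work lies; note that any correct argument must use $P\neq Q$ essentially, since relative injectivity genuinely fails \emph{within} a primary component ($\Bbb{Z}_{p}$ is not $\Bbb{Z}_{p^{2}}$-injective even though their sum is extending). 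Finally, the uniqueness argument is wrong for minimal primes: over $\Bbb{Z}$ the sum of all $0$-coprimary (torsion-free) submodules of $\Bbb{Z}\oplus\Bbb{Z}_{p^{\infty}}$ is the whole module, so $M(0)$ is not recovered intrinsically as you claim, and isomorphism of injective hulls does not force isomorphism of the components ($\mathrm{E}(\Bbb{Z})\simeq\mathrm{E}(\Bbb{Z}_{(p)})$). The converse direction points in the right direction but leans on the nontrivial theorem that an infinite direct sum of pairwise relatively injective extending modules is extending, which itself requires a local-summand type hypothesis to be verified rather than asserted.
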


%%%%%%%%%%%%%%%%%%%%%%%%%%%%%%%%%%%%%%%%%%%%%%%%%%%%%%%%%%%%%%%%%%%%%%%%%%%%%%%%%%%%%%%%%%%%%

\begin{thm}\cite[Corollary 23]{structure}\label{21}
Let $M$ be a torsion module over  a Dedekind-domain $R$. Then $M$ is extending if and only if for each nonzero prime ideal $P$, either $M(P)$ is injective or $M(P)$ is a direct sum of copies $R/P^{n}$ or $R/P^{n+1}$ for some $n\in \Bbb{N}$.
\end{thm}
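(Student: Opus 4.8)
The plan is to reduce the statement, one prime at a time, to the classification of extending torsion modules over a discrete valuation ring, using the primary-decomposition result of Theorem \ref{structure}.

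\emph{Reduction to a single prime.} Since $R$ is Dedekind and $M$ is torsion, the associated primes of $M$ are exactly the nonzero primes in its support, and the primary decomposition $M=\bigoplus_P M(P)$ is precisely the decomposition appearing in Theorem \ref{structure}. To invoke that theorem as an equivalence I must check its relative-injectivity clause, and here it is automatic: for distinct nonzero primes $P\neq Q$ one has $\operatorname{Hom}_R(M(Q),M(P))=0$, because an element of $M(P)$ is killed by a power of $P$, an element of $M(Q)$ by a power of $Q$, and $P^{a}+Q^{b}=R$ for all $a,b$; hence every map from a submodule of $M(Q)$ into $M(P)$ vanishes and $M(P)$ is trivially $M(Q)$-injective. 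Thus Theorem \ref{structure} yields that $M$ is extending if and only if each $M(P)$ is extending, and it remains to treat a single $P$-primary module.

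\emph{Reduction to a DVR and splitting off the injective part.} As elements outside $P$ act invertibly on a $P$-primary module, $M(P)$ is a module over $R_P$ and its $R$- and $R_P$-submodule lattices coincide, so $M(P)$ is extending over $R$ if and only if it is extending over the discrete valuation ring $R_P$, whose maximal ideal I write as $(\pi)$; there the finite-length indecomposables are the $R_P/(\pi)^{k}\cong R/P^{k}$ and the only indecomposable torsion injective is the Pr\"{u}fer module $E(R/P)$. The maximal divisible submodule $D$ of $M(P)$ is injective (a divisible torsion module over a Dedekind domain is injective), hence a direct summand, $M(P)=D\oplus B$ with $B$ reduced; $M(P)$ is injective exactly when $B=0$, so everything comes down to identifying the reduced summand $B$ as a direct sum of copies of $R/P^{n}$ and $R/P^{n+1}$.

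\emph{The reduced part and the main obstacle.} The heart of the argument is to show that a reduced extending torsion module $B$ over a DVR is bounded; granting this, Kulikov's theorem gives $B=\bigoplus_i R/P^{n_i}$, and the extending property then forces the exponents $n_i$ to occupy at most two consecutive values. The mechanism for this last step is a Krull--Schmidt obstruction: if lengths $n<m$ with $m\geq n+2$ both occurred, then for generators $c$ of an $R/P^{n}$ summand and $a$ of an $R/P^{m}$ summand the diagonal element $x=c+\pi a$ has order $\pi^{m-1}$ and $\langle x\rangle$ is a \emph{closed} cyclic submodule; were it a direct summand, comparing $B$ with $B/\langle x\rangle$ and cancelling would force $\{n,m\}=\{n+1,m-1\}$, which is impossible for $m\geq n+2$, contradicting that $B$ is extending. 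The one genuinely hard point is boundedness itself: the Krull--Schmidt trick does not exclude unbounded sums such as $\bigoplus_{k\ge 1}R/P^{k}$ (whose lengths are already consecutive), and ruling these out is the real content. I would prove it by showing that an unbounded reduced module has socle elements of unbounded height and using these to build, inside its pure-injective (torsion-completion) hull, a uniform closed submodule whose intersections with the finitely supported cyclic summands admit no complement, so that the extending property must fail. Once boundedness is secured, the sufficiency of the two stated families (injectives are extending, and a direct sum of cyclics of two consecutive lengths is extending) is routine, completing the characterization.
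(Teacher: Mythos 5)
The paper does not prove this statement: it is quoted verbatim from Kamal and M\"{u}ller \cite[Corollary 23]{structure}, so there is no in-paper proof to compare yours against. Judged on its own, your skeleton --- primary decomposition via Theorem \ref{structure} with the automatic relative injectivity, localization at $P$, splitting off the divisible part, and the closed-but-not-a-summand cyclic $\langle c+\pi a\rangle$ inside $R/P^{n}\oplus R/P^{m}$ for $m\geq n+2$ --- is the standard route and most of it is sound.

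There is, however, a genuine gap at the divisible/reduced splitting. After writing $M(P)=D\oplus B$ you say that ``everything comes down to identifying the reduced summand $B$,'' so your necessity argument would at best show: $M(P)$ extending implies $B$ is a direct sum of copies of $R/P^{n}$ and $R/P^{n+1}$. That is strictly weaker than the stated dichotomy, which asserts that $D$ and $B$ cannot both be nonzero; your outline never addresses this, and your sufficiency claim (correctly) covers only the two pure cases, so the two halves of your proof do not meet. The exclusion of the mixed case is not a formality. Take $E(R/P)\oplus R/P$, i.e.\ $\mathbb{Z}_{p^{\infty}}\oplus \mathbb{Z}/p\mathbb{Z}$ over $\mathbb{Z}$: its divisible part is injective and its reduced part is a single copy of $R/P$, so it passes every test in your outline, yet it is not extending. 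Indeed, for $b\in E(R/P)$ of order $P^{2}$ the cyclic submodule $U=\langle (b,1)\rangle$ has length $2$ and is closed (any proper essential extension would be uniserial and contain $y$ with $\pi y$ a unit multiple of $(b,1)$, impossible since the second coordinate of $\pi y$ is always $0$), but it is not a summand: a complement $C$ would have to contain the divisible part $E(R/P)\oplus 0$, forcing $M/C$ to have length at most $1$ while $M/C\simeq U$ has length $2$. So a separate argument of exactly this kind is needed to kill the mixed case. A secondary point: the boundedness of $B$, which you single out as the hard step and propose to attack through pure-injective hulls, actually follows from the obstruction you already have --- an unbounded reduced $P$-primary module has an unbounded basic submodule, hence cyclic direct summands $R/P^{k}$ for infinitely many $k$, hence a direct summand $R/P^{n}\oplus R/P^{m}$ with $m\geq n+2$, to which your Krull--Schmidt argument applies; no completion machinery is required.
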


%%%%%%%%%%%%%%%%%%%%%%%%%%%%%%%%%%%%%%%%%%%%%%%%%%%%%%%%%%%%%%%%%%%%%%%%%%%%%%%%%%%%%%%%%%%%%%%%%5555

\begin{thm}\cite[Theorem 14]{extending over comm. domain}\label{30}
Let $M$ be a torsion free module over a Dedekind domain $R$. Then $M$ is extending if and only if $M=F\oplus E$ where $E$ is an injective $R$-module and $F\simeq \oplus_{i=1}^{n}NI_{i}$ where $N$ is a proper $R$-submodule of the quotient field $K$  and each $I_{i}$  is a  fractional ideal of $R$.
\end{thm}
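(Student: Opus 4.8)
Write $K$ for the quotient field of $R$. The plan is to first normalize $M$ by peeling off its injective part, then to treat the two implications separately, with the whole difficulty concentrated in the rank-one analysis of submodules of $K$. Since $R$ is a domain, a torsion-free module is nonsingular and its injective hull is $M\otimes_R K\simeq K^{(\lambda)}$, where $\lambda$ is the torsion-free rank; in particular a torsion-free module is injective if and only if it is divisible, if and only if it is a $K$-vector space. Consequently the maximal divisible submodule $d(M)$ is injective, hence a direct summand, and a complement $F$ with $M\simeq d(M)\oplus F$ is reduced (divisible-free) and torsion-free. Setting $E:=d(M)$ reduces the whole statement to describing the reduced summand $F$, since $E$ is exactly the injective part of the asserted decomposition.

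For ``$M$ extending $\Rightarrow$ the stated form'' I would argue as follows. As a direct summand of an extending module, $F$ is extending and reduced. The first key step is to show that a reduced torsion-free extending module over a Dedekind domain has \emph{finite} uniform dimension: if $F$ contained an infinite independent family of uniform submodules I would imitate the classical construction showing that infinite direct sums of non-injective uniform modules fail $C_{1}$, producing a closed submodule with no complement, the non-injectivity supplied by reducedness being exactly what drives the construction. Once $F$ has finite uniform dimension $n$, the standard decomposition of a finite-dimensional extending module gives $F\simeq\bigoplus_{i=1}^{n}F_i$ with each $F_i$ uniform; being torsion-free of rank one it embeds in $K$, and being reduced it is a \emph{proper} submodule of $K$. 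The decisive point is then that all the $F_i$ share a common shape: using that each $F_i\oplus F_j$ is again extending, a graph argument (closing the graph of a multiplication $K\to K$ to a direct summand of $F_i\oplus F_j$) forces $F_i$ and $F_j$ to differ by a fractional ideal, so $F_i\simeq N I_i$ for a fixed proper $N\subseteq K$ (say $N=F_1$) and fractional ideals $I_i$, yielding $F\simeq\bigoplus_{i=1}^{n}N I_i$.

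For the converse I would start from $F\simeq\bigoplus_{i=1}^{n}N I_i$ and show $E\oplus F$ is extending. Because each $I_i$ is invertible, $N\otimes_R I_i\simeq N I_i$, so $F\simeq N\otimes_R\bigl(\bigoplus_i I_i\bigr)\simeq N^{\,n-1}\oplus NI$ for a single ideal $I$ by the Steinitz form of finitely generated projectives; thus it suffices to control one module $N$ in finite direct sums. Here I would pass to the order $\mathcal{O}:=\mathrm{End}_R(N)=(N:N)\subseteq K$, which is again a Dedekind domain since every overring of a Dedekind domain is Dedekind, and over $\mathcal{O}$ the module $N$ is a fractional ideal of controlled type. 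Extending-ness of $N^{\,n-1}\oplus NI$ is then checked directly: each closed rank-one submodule is the graph of a multiplication, giving a sequence $0\to A\to N^{\,n-1}\oplus NI\to B\to 0$ with $A,B$ rank-one submodules of $K$ carrying the same divisibility data as $N$, and the shared shape forces the sequence to split. Finally I reattach $E$: since $E$ is injective it is $F$-injective and $F$ is trivially $E$-ojective, so $E\oplus F$ is extending as soon as $F$ is, completing ``$\Leftarrow$''.

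The main obstacle is the rank-one local analysis underlying both the ``common $N$'' step and the extending-ness of $\bigoplus N I_i$: one must pin down exactly when $A\oplus B$ is extending for proper $A,B\subseteq K$. The clean route is to localize, since over each discrete valuation ring $R_P$ the submodules of $K$ are totally ordered (the $\pi_P^{m}R_P$ together with $K$), so the graphs of multiplications close up predictably and the global relation ``$A$ and $B$ differ by a fractional ideal'' becomes ``$A$ and $B$ have the same divisibility primes and equal $P$-valuations for all but finitely many $P$.'' Assembling these local conditions globally, and in particular ensuring the twisting modules $I_i$ come out \emph{finitely generated} rather than arbitrary submodules of $K$, is the delicate part; this is also where the analogy with the torsion case (Theorem \ref{21}) is only formal, since here the single relevant associated prime is $(0)$ and Theorem \ref{structure} yields no further splitting.
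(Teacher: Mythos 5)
The paper does not prove this statement at all: it is imported verbatim as \cite[Theorem 14]{extending over comm. domain} (Kamal--M\"{u}ller), so there is no internal proof to compare against. Your architecture --- split off the maximal divisible submodule as the injective part, show the reduced complement has finite rank, then do a rank-one analysis of proper submodules of $K$ --- is indeed the right skeleton and matches the source. But every decisive step is left as a programmatic sketch, and two of the general principles you lean on are false as stated. First, the finite-rank step: there is no ``classical construction showing that infinite direct sums of non-injective uniform modules fail $C_1$.'' Over $\Bbb{Z}$ the module $\Bbb{Z}_p^{(\Bbb{N})}$ is an infinite direct sum of non-injective uniform modules and is semisimple, hence extending; more to the point, Theorem \ref{21} of this very paper exhibits torsion extending modules that are infinite direct sums of the non-injective uniforms $R/P^n$. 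So the claim that a reduced torsion-free extending module has finite uniform dimension (which is true and essential) requires an argument specific to reduced rank-one torsion-free pieces, and you have not supplied one. Second, the reattachment of $E$: ``$E$ injective and $F$ extending implies $E\oplus F$ extending'' is false in general --- $\Bbb{Z}_{p^\infty}\oplus\Bbb{Z}_{p^2}$ is injective-plus-uniform yet not extending, again by Theorem \ref{21}. The assertion that ``$F$ is trivially $E$-ojective'' is precisely the nontrivial point in the torsion-free case (note $\Bbb{Z}$ is not $\Bbb{Q}$-injective, so nothing here is automatic), and it needs the rank-one computation you defer.

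Beyond these two false appeals, the two genuinely hard steps --- that a graph argument forces all uniform summands $F_i$ to have the common shape $NI_i$ with the $I_i$ \emph{finitely generated} fractional ideals, and that conversely every closed submodule of $\oplus_{i=1}^{n}NI_i$ (not only the rank-one or corank-one ones you mention) is a direct summand --- are asserted rather than proved; you yourself flag them as ``the main obstacle'' and ``the delicate part.'' As it stands the proposal is a plausible outline of the Kamal--M\"{u}ller proof, not a proof: to complete it you would need (i) a correct finiteness argument for the reduced part, (ii) the explicit local criterion for $A\oplus B$ extending with $A,B$ proper submodules of $K$, carried out and globalized, and (iii) a direct verification (via the characterization of extending modules with finite uniform dimension by closed submodules with uniform quotient) that $E\oplus\bigl(\oplus_{i=1}^{n}NI_i\bigr)$ is extending, rather than an appeal to a false general splitting principle.
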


%%%%%%%%%%%%%%%%%%%%%%%%%%%%%%%%%%%%%%%%%%%%%%%%%%%%%%%%%%%%%%%%%%%%%%%%%%%%%%%%%%%%%%%%%%%%%%%%

In the next Theorem, we investigate when over a commutative Noetherian ring $R$, two subisomorphic extending $R$-modules are isomorphic.

%%%%%%%%%%%%%%%%%%%%%%%%%%%%%%%%%%%%%%%%%%%%%%%%%%%%%%%%%%%%%%%%%%%%%%%%%%%%%%%%%%%%%%%%%%%%%%%%%

\begin{prop}\label{10}
Let $R$ be a commutative Noetherian ring. The following statements  are equivalent:\\
{\rm (a)} Any pair of subisomorphic extending $R$-modules are isomorphic;\\
 {\rm (b)} For any prime ideal $P$ of $R$, any pair of subisomorphic extending $R$-modules with associated prime  $P$ are isomorphic to each other.
\end{prop}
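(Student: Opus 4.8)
The plan is to treat the easy implication (a)~$\Rightarrow$~(b) first and then concentrate all the work on (b)~$\Rightarrow$~(a). Implication (a)~$\Rightarrow$~(b) is immediate: an extending module with associated prime $P$ is in particular an extending module, so if any two subisomorphic extending modules are isomorphic, this holds a fortiori for those having associated prime $P$. For the converse, I would begin from the decomposition supplied by Theorem~\ref{structure}. Given subisomorphic extending modules $X$ and $Y$, fix monomorphisms $\alpha\colon X\hookrightarrow Y$ and $\beta\colon Y\hookrightarrow X$, and write $X=\bigoplus_P X(P)$ and $Y=\bigoplus_P Y(P)$, where each summand $X(P)$ (respectively $Y(P)$) is extending with single associated prime $P$. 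The strategy is to push the subisomorphism of $X$ and $Y$ down to a subisomorphism of the matching primary components $X(P)$ and $Y(P)$, apply hypothesis (b) to each $P$, and then reassemble.

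The crux, and the step I expect to be the main obstacle, is showing that an embedding is compatible with this primary decomposition. Concretely, I would isolate the following lemma: if $N$ is a module with ${\rm Ass}(N)=\{P\}$ and $N\hookrightarrow M=\bigoplus_Q M(Q)$ is any monomorphism, then the composite $N\hookrightarrow M\twoheadrightarrow M(P)$ with the canonical projection is still injective, so that $N$ embeds into $M(P)$. To prove it, set $N'=N\cap\bigl(\bigoplus_{Q\neq P}M(Q)\bigr)$, which is precisely the kernel of that composite. Since $N'\leq N$ we get ${\rm Ass}(N')\subseteq{\rm Ass}(N)=\{P\}$, while $N'\leq\bigoplus_{Q\neq P}M(Q)$ forces ${\rm Ass}(N')\subseteq\bigcup_{Q\neq P}\{Q\}$, a set not containing $P$. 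Hence ${\rm Ass}(N')=\emptyset$, and because $R$ is Noetherian this yields $N'=0$. This uses only standard facts about associated primes over a commutative Noetherian ring (a submodule has a smaller set of associated primes, a direct sum has the union of the associated primes, and a nonzero module has nonempty ${\rm Ass}$); the genuinely delicate point is simply recognising that these two constraints on ${\rm Ass}(N')$ are incompatible unless $N'$ vanishes.

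With the lemma available, the remainder is routine. Applying it to $X(P)\hookrightarrow X\xrightarrow{\alpha}Y$ produces an embedding $X(P)\hookrightarrow Y(P)$, and symmetrically $Y(P)\hookrightarrow Y\xrightarrow{\beta}X$ produces $Y(P)\hookrightarrow X(P)$; in particular the zero components match, so only genuinely nonzero primary pieces remain. Thus for each prime $P$ the components $X(P)$ and $Y(P)$ are subisomorphic, and since they are extending with associated prime $P$ by Theorem~\ref{structure}, hypothesis (b) gives $X(P)\simeq Y(P)$. Assembling these isomorphisms yields $X=\bigoplus_P X(P)\simeq\bigoplus_P Y(P)=Y$, which completes the proof.
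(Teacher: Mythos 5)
Your proposal is correct and follows essentially the same route as the paper: both decompose $X$ and $Y$ via Theorem~\ref{structure} and show that the embeddings respect the primary components by observing that the intersection $f(X(P))\cap\bigoplus_{Q\neq P}Y(Q)$ would have to have associated prime set simultaneously equal to $\{P\}$ and contained in the complement of $\{P\}$, hence is zero over a Noetherian ring. Your phrasing of this as a standalone lemma (the composite with the projection onto $M(P)$ is injective) is in fact slightly more careful than the paper's conclusion that $f(X(P))\subseteq Y(P)$, but the argument is the same.
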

\begin{proof}
(a) $\Rightarrow$ (b). This is clear. \\
(b) $\Rightarrow$ (a). Assume that $X$ and $Y$ are subisomorphic extending $R$-modules and $f:X\rightarrow Y$, $g:Y\rightarrow X$ are $R$-monomorphisms. By Theorem \ref{structure},  $X$ and $Y$ have the decompositions $X=\oplus_{P}X(P)$ and $Y=\oplus_{P}Y(P)$ where $P$ runs over all nonzero prime ideals,  $X(P)$ and $Y(P)$ are extending submodules of $X$ and $Y$ respectively, with associated primes $P$. We claim that for any prime ideal $P$,  $f(X(P))\cap \oplus_{P\neq Q}Y(Q)=0$. Let  $L:=f(X(P))\cap \oplus_{P\neq Q}Y(Q)\neq 0$.
Over a right Noetherian ring $R$, any  nonzero $R$-module has an associated prime \cite[Proposition 3.12]{Goodearl}. Therefore  $\emptyset\neq {\rm Ass}(L)\subseteq {\rm Ass}(X(P))=\{P\}$. Thus
${\rm Ass}(L)=\{P\}$. On the other hand $\{P\}={\rm Ass}(L)\subseteq {\rm Ass}(\oplus_{P\neq Q}Y(Q))=\{Q \ | \ Q$ is a prime ideal of $R$ and $Q\neq P\}$, a contradiction. Therefore $f(X(P))\cap \oplus_{P\neq Q}Y(Q)=0$ and then $f(X(P))\subseteq Y(P)$. Similarly, $g(Y(P))\subseteq X(P)$. Thus for any prime ideal $P$, $X(P)$ and $Y(P)$ are extending subisomorphic $R$-modules with associated primes $P$. Hence by our assumption, for any prime ideal $P$, $X(P)\simeq Y(P)$ and so $X$ is isomorphic to $Y$.
\end{proof}

%%%%%%%%%%%%%%%%%%%%%%%%%%%%%%%%%%%%%%%%%%%%%%%%%%%%%%%%%%%%%%%%%%%%%%%%%%%%%%%%%%%%%%%%%%%%%%%%%%%%%555

%%%%%%%%%%%%%%%%%%%%%%%%%%%%%%%%%%%%%%%%%%%%%%%%%%%%%%%%%%%%%%%%%%%%%%%%%%%%%%%%%%%%%%%%%%%%%%%

\begin{prop}\label{12}
let $R$ be a commutative domain. The following are equivalent:\\
{\rm (a)} Any pair of subisomorphic extending $R$-modules are isomorphic.\\
{\rm (b)} Any pair of subisomorphic torsion free extending $R$-modules are isomorphic.\\
{\rm (c)} $R$ is a field.
\end{prop}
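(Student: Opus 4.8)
The plan is to prove the cycle of implications (a) $\Rightarrow$ (b) $\Rightarrow$ (c) $\Rightarrow$ (a). The implication (a) $\Rightarrow$ (b) is immediate, since torsion free extending modules form a subclass of extending modules. For (c) $\Rightarrow$ (a), suppose $R$ is a field; then every $R$-module is a vector space, hence injective and in particular extending. If $X$ and $Y$ are subisomorphic, the two monomorphisms give $\dim_R X \le \dim_R Y$ and $\dim_R Y \le \dim_R X$ as cardinals, so by the Schr\"{o}der--Bernstein theorem for cardinals $\dim_R X = \dim_R Y$ and therefore $X \simeq Y$. Thus the whole content of the statement lies in (b) $\Rightarrow$ (c).

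I would prove (b) $\Rightarrow$ (c) by contraposition: assuming $R$ is a commutative domain that is not a field, I will exhibit a pair of torsion free extending $R$-modules that are subisomorphic but not isomorphic, so that (b) fails. Let $Q$ denote the field of fractions of $R$; since $R$ is not a field we have $R \subsetneq Q$. Set
\[
N = Q^{(\Bbb{N})} \oplus R \qquad \text{and} \qquad K = Q^{(\Bbb{N})},
\]
exactly as in Example \ref{Baer.subisomorphic} (which treats the case $R = \Bbb{Z}$). Both modules are torsion free. They are subisomorphic: $K$ is a direct summand of $N$, while the inclusion $R \hookrightarrow Q$ yields a monomorphism $N = Q^{(\Bbb{N})} \oplus R \hookrightarrow Q^{(\Bbb{N})} \oplus Q \simeq Q^{(\Bbb{N})} = K$. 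Finally $N \not\simeq K$: for any nonzero nonunit $r \in R$ (which exists because $R$ is not a field) we have $K = rK$ since $Q = rQ$, whereas $N/rN \simeq R/rR \neq 0$. Hence $K$ is divisible while $N$ is not, and since divisibility is an isomorphism invariant, $N \not\simeq K$.

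The main obstacle is to verify that $N$ and $K$ are extending over an \emph{arbitrary} domain $R$, since the extending property of infinite direct sums is delicate and $Q^{(\Bbb{N})}$ need not be injective when $R$ is non-Noetherian. The key observation is that a torsion free module $M$ embeds essentially in the $Q$-vector space $\widehat{M} = Q \otimes_R M$, and every closed submodule $C$ of $M$ has the form $C = (QC) \cap M$, where $QC \le \widehat{M}$ is the $Q$-subspace spanned by $C$. For $K = Q^{(\Bbb{N})}$ one has $\widehat{K} = K$, so its closed submodules are precisely the $Q$-subspaces, and these are clearly $R$-direct summands; hence $K$ is extending. For $N$ one has $\widehat{N} \simeq Q^{(\Bbb{N})}$, and given a closed submodule $C = \widehat{C} \cap N$ with $\widehat{C}$ a $Q$-subspace, the projection $\widehat{N} \to \widehat{R} = Q$ sends $\widehat{C}$ either to $0$ or onto $Q$. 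In the first case $C \subseteq Q^{(\Bbb{N})}$ and the argument for $K$ applies; in the second case one checks that $C$ splits as $(\,\widehat{C} \cap Q^{(\Bbb{N})}) \oplus Rc$ for a suitable $c$ with $Rc \simeq R$, and that this rank one free part is itself a direct summand of $N$. In either case $C$ is a direct summand, so $N$ is extending.

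If one prefers to avoid this direct computation, the implication (b) $\Rightarrow$ (c) can instead be split according to whether $R$ is a principal ideal domain. If $R$ is not a PID it has a nonzero non-principal ideal $I$, and then $R$ and $I$ are uniform (hence extending) torsion free modules that are subisomorphic but satisfy $R \not\simeq I$, exactly as in Example \ref{1.1}. If $R$ is a PID but not a field, then $R$ is a Dedekind domain, and the modules $N$ and $K$ above are extending by Theorem \ref{30} (taking $F = R$ and $E = Q^{(\Bbb{N})}$, the latter being injective since $R$ is Noetherian), which gives the same conclusion.
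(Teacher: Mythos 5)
Your proposal is correct, and your ``fallback'' argument for (b) $\Rightarrow$ (c) --- first forcing $R$ to be a PID via a non-principal ideal $I$ (so that $R$ and $I$ are subisomorphic uniform torsion free extending modules with $R\not\simeq I$), then applying Theorem \ref{30} to $Q^{(\Bbb{N})}\oplus R$ and $Q^{(\Bbb{N})}$ --- is exactly the paper's proof, up to the immaterial substitution of the direct sum $Q^{(\Bbb{N})}$ for the product $Q^{\Bbb{N}}$ used there. Your primary argument, however, is genuinely different and arguably stronger: by observing that a closed submodule $C$ of a torsion free module $M$ is recovered as $QC\cap M$ inside the divisible hull $Q\otimes_R M$, you verify \emph{directly} that $Q^{(\Bbb{N})}$ and $Q^{(\Bbb{N})}\oplus R$ are extending over an arbitrary domain, which makes the PID reduction and the Kamal--M\"{u}ller classification (Theorem \ref{30}) unnecessary; your divisibility criterion ($K=rK$ while $N/rN\simeq R/rR\neq 0$) likewise replaces the paper's appeal to injectivity of a summand of $Q^{\Bbb{N}}$. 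The one step you leave as ``one checks'' --- that $C=(\widehat{C}\cap Q^{(\Bbb{N})})\oplus Rc$ with $c=(w,1)$ is a summand of $N$ --- does go through: the automorphism $(x,q)\mapsto(x-qw,q)$ of $N$ carries $C$ to $V\oplus R$ with $V=\widehat{C}\cap Q^{(\Bbb{N})}$ a $Q$-subspace, and any $Q$-complement of $V$ in $Q^{(\Bbb{N})}$ is then an $R$-complement of $C$; it would be worth writing this out. In short, the paper's route buys brevity by outsourcing the extending verification to the structure theorem over Dedekind domains, while yours buys a self-contained and more elementary proof of the only nontrivial implication.
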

\begin{proof}
(a) $\Rightarrow$ (b) and (c) $\Rightarrow$ (a) are clear.\\
(b) $\Rightarrow$ (c). Since $R$ is a commutative domain, any nonzero ideal of $R$ is uniform and so extending.  Let $I$ be any nonzero ideal of $R$. Therefore $R$ and $I$ are two extending torsion free $R$-modules which are subisomorphic to each other. Hence by our assumption, $R\simeq I$. Thus $R$ is a PID.  Now let $X=Q^{\Bbb{N}}\oplus R$ and $Y=Q^{\Bbb{N}}$ where $Q=Q(R)$ is the  quotient field of $R$. By Theorem \ref{30}, $X$ and $Y$ are extending. It is clear that $X$ and $Y$ are also torsion free $R$-modules which are subisomorphic to each other. Therefore by our  assumption,  $X\simeq Y$. Thus  $R_R$ should be injective and then $R=Q$ is a field.
\end{proof}

%%%%%%%%%%%%%%%%%%%%%%%%%%%%%%%%%%%%%%%%%%%%%%%%%%%%%%%%%%%%%%%%%%%%%%%%%%%%%%%%%%%%%%%%%%%%%%%%%%

In the following example we show that two mutually subisomorphic torsion extending $\Bbb{Z}$-modules are not isomorphic to each other in general.

\begin{exam}\label{13}
Suppose that $X=\Bbb{Z}_{p}\oplus \Bbb{Z}_{p^{2}}^{(\Bbb{N})}$ and $Y=\Bbb{Z}_{p}^{(2)}\oplus \Bbb{Z}_{p^{2}}^{(\Bbb{N})}$ where $p$ is a prime number.  We note that $X$ and $Y$ are  torsion and also by Theorem \ref{21}, are  extending   $\Bbb{Z}$-modules. It is obvious that $X$ and $Y$ are subisomorphic to each other. However, $X$ is not isomorphic to $Y$. To see this, it is routine to check that $X$ has only one simple submodule  $\Bbb{Z}_{p}\oplus 0$ which is not contained in any proper cyclic submodule, however $Y$ has two simple submodules $\Bbb{Z}_{p}\oplus 0$ and $0\oplus \Bbb{Z}_{p}$ which are  not contained in any proper cyclic submodule.
Therefore two subisomorphic extending torsion modules are not necessarily  isomorphic to each other even over a PID.
\end{exam}

%%%%%%%%%%%%%%%%%%%%%%%%%%%%%%%%%%%%%%%%%%%%%%%%%%%%%%%%%%%%%%%%%%%%%%%%%%%%%%%%%%%%%%%%%%%%%%%%5555

Now, in the next result we investigate when over a commutative principal ideal ring $R$, mutually subisomorphic torsion extending $R$-modules are isomorphic to each other.
We recall that a commutative ring $R$ {\it has zero dimensional} whenever every prime ideal of $R$ is maximal and is denoted by dim$(R)=0$.

%%%%%%%%%%%%%%%%%%%%%%%%%%%%%%%%%%%%%%%%%%%%%%%%%%%%%%%%%%%%%%%%%%%%%%%%%%%%%%%%%%%%%%%%%%%%%%

\begin{thm}\label{15}
Let $R$ be a commutative principal ideal ring. If any pair of subisomorphic extending  torsion $R$-modules are isomorphic then ${\rm dim}(R)=0$.
\end{thm}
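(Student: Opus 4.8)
The plan is to prove the contrapositive: assuming $\dim(R)\neq 0$, I would construct a pair of subisomorphic extending torsion $R$-modules that fail to be isomorphic, contradicting the hypothesis. Since $\dim(R)\neq 0$, there is a prime ideal $P$ of $R$ that is not maximal, so there is a maximal ideal $M$ with $P\subsetneq M$. First I would pass to $\bar R:=R/P$. As a quotient of a commutative principal ideal ring, $\bar R$ is again a principal ideal ring, and it is a domain because $P$ is prime; hence $\bar R$ is a PID, in particular a Dedekind domain. Since $M/P$ is a nonzero prime ideal of $\bar R$, the ring $\bar R$ is not a field.

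Next, fixing a nonzero prime ideal $\mathfrak p$ of the PID $\bar R$ and writing $\kappa:=\bar R/\mathfrak p$, I would imitate Example~\ref{13} by setting
\[
X' = \bar R/\mathfrak p \oplus (\bar R/\mathfrak p^{2})^{(\Bbb{N})}, \qquad Y' = (\bar R/\mathfrak p)^{(2)} \oplus (\bar R/\mathfrak p^{2})^{(\Bbb{N})}.
\]
Both are $\mathfrak p$-primary torsion $\bar R$-modules, and by Theorem~\ref{21} each is extending, since at the prime $\mathfrak p$ it is a direct sum of copies of $\bar R/\mathfrak p^{n}$ and $\bar R/\mathfrak p^{n+1}$ with $n=1$, and it vanishes at every other prime. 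I would check that $X'$ and $Y'$ are subisomorphic exactly as in Example~\ref{13}: since $Y'\simeq \kappa\oplus X'$, the module $X'$ is a direct summand of $Y'$, while $Y'$ embeds in $X'$ by sending the extra copy of $\kappa$ onto the socle $\mathfrak p/\mathfrak p^{2}$ of one $\bar R/\mathfrak p^{2}$ summand and shifting the remaining summands. To see that $X'\not\simeq Y'$ I would use the isomorphism invariant $\dim_{\kappa}\big(\mathrm{soc}(M)/(\mathrm{soc}(M)\cap \mathfrak p M)\big)$, which counts the number of length-one summands at $\mathfrak p$; as $\mathfrak p^{2}$ kills both modules one has $\mathfrak p M\subseteq \mathrm{soc}(M)$, and a direct computation gives the value $1$ for $X'$ and $2$ for $Y'$.

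Finally I would transfer this example back to $R$ along the surjection $R\twoheadrightarrow \bar R$, regarding $X'$ and $Y'$ as $R$-modules on which $P$ acts as zero. Because every scalar action factors through $\bar R$, the $R$-submodule lattice of each module coincides with its $\bar R$-submodule lattice, and $R$-homomorphisms between them coincide with $\bar R$-homomorphisms; consequently being extending, being subisomorphic, and failing to be isomorphic are all inherited from the $\bar R$-module structure. Moreover $X'$ and $Y'$ are annihilated by the image of $\mathfrak p^{2}$, whose preimage in $R$ is a nonzero ideal, so both are torsion $R$-modules. Thus $X'$ and $Y'$ are subisomorphic extending torsion $R$-modules that are not isomorphic, contradicting the hypothesis; therefore $\dim(R)=0$.

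The reduction to the quotient $R/P$ and the transfer of the relevant properties along $R\twoheadrightarrow R/P$ are formal. I expect the one genuinely module-theoretic step to be the non-isomorphism of $X'$ and $Y'$, namely verifying that the count of length-one cyclic summands at $\mathfrak p$ (equivalently $\dim_{\kappa}\mathrm{soc}(M)/\mathfrak p M$) is an isomorphism invariant; this is the clean generalization of the ``simple submodule not contained in a proper cyclic submodule'' argument used in Example~\ref{13}.
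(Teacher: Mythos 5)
Your proposal is correct and follows essentially the same route as the paper: reduce to the PID quotient $R/P$ for a non-maximal prime $P$ and exhibit there the subisomorphic, non-isomorphic extending torsion pair $\bar R/\mathfrak p\oplus(\bar R/\mathfrak p^{2})^{(\Bbb{N})}$ and $(\bar R/\mathfrak p)^{(2)}\oplus(\bar R/\mathfrak p^{2})^{(\Bbb{N})}$, exactly as in the paper's proof and its Example \ref{13}. The only difference is cosmetic: you argue by contrapositive and make the non-isomorphism explicit via the invariant $\dim_{\kappa}\bigl(\mathrm{soc}(M)/\mathfrak p M\bigr)$, where the paper instead invokes the ad hoc ``simple submodule not contained in a proper cyclic submodule'' argument of Example \ref{13}.
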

\begin{proof}
First we prove the case that $R$ is a PID. Assume that $R$ is a PID such that any pair of subisomorphic extending torsion  $R$-modules are isomorphic to each other. We show that $R$ is  a field. Let $Q$ be any nonzero prime ideal of $R$, $X=R/Q\oplus ({R/Q^{2}})^{(\Bbb{N})}$ and $Y=(R/Q)^{(2)}\oplus ({R/Q^{2}})^{(\Bbb{N})}$. Clearly $X$ and $Y$ are torsion $R$-modules. Since $R$ is a PID, then  $R/Q$ is isomorphic to a submodule of $R/Q^{2}$ and so $X$ and $Y$ are subisomorphic to each other. Moreover,  by Theorem \ref{21},  $X$ and $Y$ are extending $R$-modules. Therefore by our assumption $X\simeq Y$, a contradiction. This proves that $R$ has no nonzero prime ideal. Hence  the zero ideal of $R$ is maximal and so $R$ is a field, as desired. \\
Now we prove the case that $R$ is a principal ideal ring.
Let $P$ be any prime ideal of $R$ and  $M$ be any $R/P$-module. It is routine to see that ${\rm Z}(M_{R/P})\subseteq {\rm Z}(M_{R})$. Therefore if $M_{R/P}$ is torsion then $M_R$ is so. Besides,  $M_R$ is extending if and only if $M_{R/P}$ is extending. Since  $R/P$ is a PID and any pair of subisomorphic extending torsion $R/P$-module are isomorphic to each other, by the above note, $R/P$ is a field. Hence  $P$ is a maximal ideal of $R$.
\end{proof}

%%%%%%%%%%%%%%%%%%%%%%%%%%%%%%%%%%%%%%%%%%%%%%%%%%%%%%%%%%%%%%%%%%%%%%%%%%%%%%%%%%%%%%%%%%%%%%%%%5

\begin{cor}\label{16}
Let $R$ be a PID. Then any pair of subisomorphic extending torsion $R$-modules are isomorphic if and only if $R$ is a field.
\end{cor}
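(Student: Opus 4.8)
The plan is to read off both implications from Theorem \ref{15} together with the most elementary structural facts about a PID, so that essentially all the substantive work is inherited from the earlier result.

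For the forward direction I would observe that a PID is in particular a commutative principal ideal ring, so Theorem \ref{15} applies without modification: the hypothesis that any pair of subisomorphic extending torsion $R$-modules are isomorphic forces ${\rm dim}(R)=0$. Since $R$ is an integral domain, the zero ideal $(0)$ is prime, and ${\rm dim}(R)=0$ says precisely that every prime ideal of $R$ is maximal. Hence $(0)$ is itself a maximal ideal, which is to say $R\simeq R/(0)$ is a field.

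For the converse, suppose $R$ is a field. Then every nonzero element of $R$ is a unit, so the only torsion $R$-module is the zero module: if $rm=0$ for some $0\neq r\in R$, then $m=r^{-1}(rm)=0$, whence any torsion module is $0$. Consequently the class of torsion $R$-modules consists of $0$ alone, and the requirement that any two subisomorphic extending torsion $R$-modules be isomorphic holds vacuously. Combining the two directions yields the stated equivalence.

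I do not expect a genuine obstacle here, since the corollary is a packaging of Theorem \ref{15}: the real content (the construction of the non-isomorphic subisomorphic extending torsion modules $R/Q\oplus(R/Q^{2})^{(\Bbb{N})}$ and $(R/Q)^{(2)}\oplus(R/Q^{2})^{(\Bbb{N})}$ that rule out a nonzero prime $Q$) already lives in that proof. The only point worth stating carefully is that for a \emph{domain} the conclusion ${\rm dim}(R)=0$ is equivalent to being a field, which is immediate from the primeness of $(0)$.
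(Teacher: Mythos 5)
Your proof is correct and follows the same route as the paper, which simply cites Theorem \ref{15} for the forward direction (indeed, the PID case of that theorem's proof already concludes directly that $R$ is a field). Your observations that $\dim(R)=0$ for a domain forces $(0)$ to be maximal, and that the converse holds because the only torsion module over a field is $0$, correctly supply the details the paper leaves implicit.
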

\begin{proof}
It follows from Theorem  \ref{15}.
\end{proof}

%%%%%%%%%%%%%%%%%%%%%%%%%%%%%%%%%%%%%%%%%%%%%%%%%%%%%%%%%%%%%%%%%%%%%%%%%%%%%%%%%%%%%%%%%%%%%%%5555555

%In the following we leave an open question.

%\begin{open}
%Characterize rings $R$ over which any two subisomorphic nonsingular extending $R$-modules are isomorphic.
%\end{open}

%We end the paper with an investigation on the following  stronger case: \\
%``characterize rings $R$ over which every nonsingular extending $R$-module $X$ has the property that $X$ is isomorphic to $Y$ where $Y$ is any $R$-module which is subisomorphic to $X$".\\ $R$-module $X$ with this property is called mono-correct in \cite{correct}.
%%%%%%%%%%%%%%%%%%%%%%%%%%%%%%%%%%%%%%%%%%%%%%%%%%%%%%%%%%%%%%%%%%%%%%%%%%%%%%%%%%%%%%%%%%%%%%%%%%%%%%%%%5

%%%%%%%%%%%%%%%%%%%%%%%%%%%%%%%%%%%%%%%%%%%%%%%%%%%%%%%%%%%%%%%%%%%%%%%%%%%%%%%%%%%%%%%%%%%%%%%%%%%%%%%5555
%%%%%%%%%%%%%%%%%%%%%%%%%%%%%%%%%%%%%%%%%%%%%%%%%%%%%%%%%%%%%%%%%%%%%%%%%%%%%%%%%%%%%%%%%%%%%%%%%%%%%%%5555
%%%%%%%%%%%%%%%%%%%%%%%%%%%%%%%%%%%%%%%%%%%%%%%%%%%%%%%%%%%%%%%%%%%%%%%%%%%%%%%%%%%%%%%%%%%%%%%%%%%%%%5555
%%%%%%%%%%%%%%%%%%%%%%%%%%%%%%%%%%%%%%%%%%%%%%%%%%%%%%%%%%%%%%%%%%%%%%%%%%%%%%%%%%%%%%%%%%%%%%%%%%%
We present in the following, a result from \cite{S.B} which will be used in this paper.
Let $M$ be an $R$-module. A  family  $\{M_{i}\}_{i\in\Lambda}$ of submodules of $M$ is called {\it a local direct summand} of $M$,  if $\sum_{i\in\Lambda} M_{i}$ is direct and $\sum_{i\in F}M_{i}$ is a direct summand of $M$ for every finite subset $F\subseteq \Lambda$. Further,  if  $\sum_{i\in\Lambda} M_{i}$ is a direct summand of $M$, we say that the {\it local direct summand is a direct summand} \cite[ Definition 2.15]{Muller}.

%%%%%%%%%%%%%%%%%%%%%%%%%%%%%%%%%%%%%%%%%%%%%%%%%%%%%%%%%%%%%%%%%%%%%%%%%%%%%%%%%%%%%%%%%%%%%%%%%%%%%5

\begin{thm}\label{sum of chain of direct summand}\cite[Theorem 2.10]{S.B}
Let $N$ and $K$ be   $R$-modules which are subisomorphic to direct summands of each other. If every local direct summand of $N$ is a direct summand  then $N\simeq K$.
\end{thm}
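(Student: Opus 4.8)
The plan is to reduce the whole statement to a single monomorphism of $N$ into itself with image a direct summand, and then to run a Fitting-type iteration whose convergence is guaranteed precisely by the local-direct-summand hypothesis.

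First I would unwind the hypothesis. Being d-subisomorphic, $N$ and $K$ admit isomorphisms $u\colon N\to K_{1}$ and $v\colon K\to N_{1}$ onto direct summands $K_{1}\leq_{\oplus}K$ and $N_{1}\leq_{\oplus}N$; write $K=K_{1}\oplus K_{2}$ and $N=N_{1}\oplus N_{2}$. Set $W:=K_{2}\oplus N_{2}$ and $\phi:=vu\colon N\to N$. Then $\phi$ is a monomorphism and $\phi(N)=v(K_{1})$ is a direct summand of $N_{1}$, hence of $N$; moreover $N_{1}=\phi(N)\oplus v(K_{2})$, so that $N=\phi(N)\oplus N_{0}$ with $N_{0}:=v(K_{2})\oplus N_{2}\simeq W$. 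I would also record the two cancellations $K\simeq N\oplus K_{2}$ and $N\simeq K\oplus N_{2}$ coming directly from $u$ and $v$.

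Next I would iterate $\phi$. Applying the monomorphism $\phi$ repeatedly to $N=N_{0}\oplus\phi(N)$ yields, by an easy induction using the injectivity of $\phi$ to keep each successive sum direct, $N=N_{0}\oplus\phi(N_{0})\oplus\cdots\oplus\phi^{n-1}(N_{0})\oplus\phi^{n}(N)$ for every $n$; thus each initial partial sum has $\phi^{n}(N)$ as a complement, and $\{\phi^{n}(N_{0})\}_{n\geq0}$ is a local direct summand of $N$ (every finite subfamily sits inside some initial segment, and a direct summand of a direct summand is again a direct summand). This is where the hypothesis enters: since every local direct summand of $N$ is a direct summand, $A:=\bigoplus_{n\geq0}\phi^{n}(N_{0})$ is a direct summand, say $N=A\oplus B$. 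Because $\phi$ is injective, each $\phi^{n}(N_{0})\simeq N_{0}\simeq W$, whence $A\simeq W^{(\Bbb{N})}$ and therefore $N\simeq W^{(\Bbb{N})}\oplus B$.

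Finally I would absorb the leftover summand by a telescoping identity: for any modules $P,Q$ one has $(P\oplus Q)^{(\Bbb{N})}\oplus P\simeq(P\oplus Q)^{(\Bbb{N})}$, obtained by re-associating the countable alternating sum. Taking $P=K_{2}$ and $Q=N_{2}$ gives $W^{(\Bbb{N})}\oplus K_{2}\simeq W^{(\Bbb{N})}$. Combining everything, $K\simeq N\oplus K_{2}\simeq W^{(\Bbb{N})}\oplus B\oplus K_{2}\simeq(W^{(\Bbb{N})}\oplus K_{2})\oplus B\simeq W^{(\Bbb{N})}\oplus B\simeq N$, as required. I expect the main obstacle to be the iteration step: one must verify carefully that the partial sums $N_{0}\oplus\cdots\oplus\phi^{k}(N_{0})$ really are direct summands, so that $\{\phi^{n}(N_{0})\}$ qualifies as a local direct summand and the hypothesis can deliver the complement $B$ that drives the argument. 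Note that the hypothesis is invoked only for $N$, which is consistent with the asymmetric formulation of the statement.
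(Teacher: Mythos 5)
The paper does not prove this statement itself---it is quoted from \cite[Theorem 2.10]{S.B} with no proof given here---so there is no in-paper argument to compare against line by line. Your proof is correct and complete: the reduction to the single monomorphism $\phi=vu$ with $N=\phi(N)\oplus N_{0}$, the verification that $\{\phi^{n}(N_{0})\}_{n\geq 0}$ is a local direct summand (each finite subfamily being a subsum of a finite direct decomposition $N=N_{0}\oplus\cdots\oplus\phi^{n-1}(N_{0})\oplus\phi^{n}(N)$), and the final Eilenberg-type absorption $W^{(\Bbb{N})}\oplus K_{2}\simeq W^{(\Bbb{N})}$ all check out, and this is essentially the standard argument underlying the cited reference.
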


%%%%%%%%%%%%%%%%%%%%%%%%%%%%%%%%%%%%%%%%%%%%%%%%%%%%%%%%%%%%%%%%%%%%%%%%%%%%%%%%%%%%%%%%%%%%%%%%%

Let $M$ be an $R$-module.
  $M$ is called {\it a dual Baer module} if for any $R$-submodule $N$ of $M$, the right ideal D$(N)=\{f\in S \ | $ Im$f\subseteq N\}$ of $S$ is generated by an idempotent in $S$  where $S=$ End$_{R}(M)$ \cite{dual baer tutu}.   The module $M$ is called {\it dual Rickart} if for every $R$-homomorphism $f:M\rightarrow M$, Im$f$ is a direct summand of $M$. It has been shown that  $M$ is dual Baer  if and only if  it is dual Rickart and  the  sum of every family of direct summands of $M$ is a direct summand \cite[Theorem 1.7]{dual rickart}. In the following, we show that subisomorphisms between a dual Rickart and a dual Baer module leads to an isomorphism between them. First we prove the following basic Lemma which should be compared with Lemma \ref{dual S.B and Baer}.

%%%%%%%%%%%%%%%%%%%%%%%%%%%%%%%%%%%%%%%%%%%%%%%%%%%%%%%%%%%%%%%%%%%%%%%%%%%%%%%%%%%%%%%%%%%%%%%%%%%%%%%%%%%%%%%%%%%%%%

\begin{lem}\label{40}
Let $N$ and $K$ be dual Rickart modules. Then $N$ and $K$ are subisomorphic to each other if and only if they are d-subisomorphic to each other.
\end{lem}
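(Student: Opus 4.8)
The forward implication, that d-subisomorphic dual Rickart modules are subisomorphic, is immediate, since a direct summand is in particular a submodule; so the whole content lies in the converse, and I would spend all the effort there.

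The plan is to upgrade the two given monomorphisms to monomorphisms whose images are direct summands, by applying the dual Rickart hypothesis to the \emph{self-composites} rather than to the individual maps. Assume $N$ and $K$ are subisomorphic, say $\phi\colon N\to K$ and $\psi\colon K\to N$ are $R$-monomorphisms. The key observation is that $\phi\psi\colon K\to K$ is a monomorphism, being a composite of monomorphisms, and hence is an endomorphism of $K$ whose image is a direct summand because $K$ is dual Rickart; thus ${\rm Im}(\phi\psi)=\phi\bigl(\psi(K)\bigr)\leq_{\oplus}K$.

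Next I would transport this summand back through $\phi$. Since $\phi\bigl(\psi(K)\bigr)\subseteq \phi(N)\subseteq K$ and $\phi\bigl(\psi(K)\bigr)\leq_{\oplus}K$, the modular law shows it is also a direct summand of the intermediate submodule $\phi(N)$. As $\phi$ restricts to an isomorphism $N\to\phi(N)$ carrying $\psi(K)$ onto $\phi\bigl(\psi(K)\bigr)$, pulling back along $\phi^{-1}$ yields $\psi(K)\leq_{\oplus}N$; and since $\psi$ is a monomorphism, $K\cong\psi(K)$, so $K$ is isomorphic to a direct summand of $N$. Running the symmetric argument with the endomorphism $\psi\phi\colon N\to N$, whose image $\psi\bigl(\phi(N)\bigr)$ is a direct summand of $N$ because $N$ is dual Rickart and which lies inside $\psi(K)$, produces after pulling back along $\psi^{-1}$ that $\phi(N)\leq_{\oplus}K$, whence $N$ is isomorphic to a direct summand of $K$. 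Together these give that $N$ and $K$ are d-subisomorphic.

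The only step requiring care is the passage from ``direct summand of $K$'' to ``direct summand of $\phi(N)$'': this is the elementary fact that if $X\leq_{\oplus}K$ with $X\subseteq Y\leq K$, then writing $K=X\oplus X'$ and applying modularity gives $Y=X\oplus(Y\cap X')$, so $X\leq_{\oplus}Y$. I do not expect any genuine obstacle here; in particular, no chain condition and no Schr\"{o}der--Bernstein telescoping is needed, precisely because each self-composite already deposits a full isomorphic copy of one module as a direct summand inside a controlled submodule of the other.
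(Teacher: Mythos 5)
Your proof is correct and is essentially the paper's argument unpacked: the paper simply cites that dual Rickart modules satisfy $C_2$ and that for $C_2$ modules subisomorphism and d-subisomorphism coincide, and both of those cited facts are proved exactly by your device of applying the hypothesis to the composite endomorphisms $\phi\psi$ and $\psi\phi$ and transporting the resulting summand back through the monomorphism via the modular law. Nothing is missing; you have just made the two citations self-contained.
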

\begin{proof}
We note that every dual Rickart module has C$_{2}$ property \cite[Proposition 2.21]{dual rickart} and for modules $N$ and $K$ with C$_{2}$ property  the notions of subisomorphism and d-subisomorphism coincide.
\end{proof}

\begin{prop}\label{d.rickar and baer}
Let $N$ be a dual Baer $R$-module and $K$  be a  dual Rickart $R$-module.  If $N$ and $K$ are  subisomorphic to each other  then $N\simeq K$.
\end{prop}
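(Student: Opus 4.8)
The plan is to convert the \emph{subisomorphism} hypothesis into a \emph{d-subisomorphism} via Lemma \ref{40}, and then feed everything into Theorem \ref{sum of chain of direct summand}. First I would record that $N$, being dual Baer, is in particular dual Rickart by \cite[Theorem 1.7]{dual rickart}, while $K$ is dual Rickart by hypothesis. Thus both $N$ and $K$ are dual Rickart modules that are subisomorphic to each other, so Lemma \ref{40} applies and upgrades this to d-subisomorphism: each of $N$ and $K$ is isomorphic to a direct summand of the other. This is exactly the kind of hypothesis Theorem \ref{sum of chain of direct summand} requires.

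The second ingredient I would establish is that every local direct summand of $N$ is a direct summand. Let $\{N_i\}_{i\in\Lambda}$ be a local direct summand of $N$. Taking the finite subset $F=\{i\}$ in the definition shows that each individual $N_i$ is a direct summand of $N$. Now I invoke the characterization in \cite[Theorem 1.7]{dual rickart}: a module is dual Baer precisely when it is dual Rickart and the sum of \emph{every} family of its direct summands is again a direct summand. Since $N$ is dual Baer, applying this to the family $\{N_i\}_{i\in\Lambda}$ of direct summands gives that $\sum_{i\in\Lambda} N_i$ is a direct summand of $N$. Hence every local direct summand of $N$ is a direct summand, which is the remaining hypothesis of Theorem \ref{sum of chain of direct summand}.

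With both pieces in hand the conclusion is immediate: $N$ and $K$ are isomorphic to direct summands of each other, and every local direct summand of $N$ is a direct summand, so Theorem \ref{sum of chain of direct summand} yields $N\simeq K$. In this sense the proof is essentially an assembly of the earlier lemmas, and I would keep the write-up short.

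The step that deserves the most care—the ``main obstacle,'' though it is mild—is the passage from subisomorphism to d-subisomorphism. It relies on dual Rickart modules enjoying the $C_2$ property (a submodule isomorphic to a direct summand is itself a direct summand), which is precisely what Lemma \ref{40} encodes. Without this, the weaker hypothesis that $N$ and $K$ embed in each other would not match the ``isomorphic to direct summands of each other'' phrasing that Theorem \ref{sum of chain of direct summand} demands, so I would make sure to cite Lemma \ref{40} explicitly rather than argue the embeddings split by hand.
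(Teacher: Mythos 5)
Your proposal is correct and follows essentially the same route as the paper: upgrade subisomorphism to d-subisomorphism via Lemma \ref{40} (using that the dual Baer module $N$ is in particular dual Rickart), verify that every local direct summand of $N$ is a direct summand because sums of families of direct summands of a dual Baer module are direct summands, and conclude with Theorem \ref{sum of chain of direct summand}. The only cosmetic difference is that you make explicit the step that $N$ is itself dual Rickart before invoking Lemma \ref{40}, which the paper leaves implicit.
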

\begin{proof} Let $N$ be dual Baer,  $K$ be dual Rickart and $N$, $K$  be  subisomorphic to each other. By Lemma \ref{40}, $N$ and $K$ are d-subisomorphic to each other.
 Since $N$ is dual Baer,  the  sum of every family of direct summands of $N$ is a direct summand \cite[Theorem 2.1]{dual baer tutu}. Therefore if  $\{N_i\}_{i\in I}$ is  a family of submodules of $N$ which  is a  local direct summand of $N$ then each $N_{i}$ is a direct summand of $N$. Thus $\sum_{i\in I} N_{i}$ is a direct summand of $N$. Hence every local direct summand of $N$ is a direct summand. Now the  result follows from Theorem
\ref{sum of chain of direct summand}.
\end{proof}

%%%%%%%%%%%%%%%%%%%%%%%%%%%%%%%%%%%%%%%%%%%%%%%%%%%%%%%%%%%%%%%%%%%%%%%%%%%%%%%%%%%%%%%%%%%%%%%%%%%%%%

The following example shows that  the hypothesis of  Proposition \ref{d.rickar and baer},  cannot be weakened any further such as to assume that both $N$ and $K$ are
dual Rickart.

%%%%%%%%%%%%%%%%%%%%%%%%%%%%%%%%%%%%%%%%%%%%%%%%%%%%%%%%%%%%%%%%%%%%%%%%%%%%%%%%%%%%%%%%%%%%%%%%%%%%%%%%

\begin{exam}\label{1}
First we note that a ring $R$ is  regular if and only if $R_R$ is dual Rickart. Moreover every direct summand of a dual Rickart module is dual Rickart \cite[Remark 2.2 and Proposition 2.8]{dual baer tutu}.
Now let $R$ be the regular ring of Example \ref{2}.
Therein we found two non-isomorphic direct summands $eR$ and $gR$ such that they are d-subisomorphic to each other,  where $e$ and $g$ are idempotents in $R$. By  the above note, $eR$ and $gR$ are also dual Rickart $R$-modules.
\end{exam}

%%%%%%%%%%%%%%%%%%%%%%%%%%%%%%%%%%%%%%%%%%%%%%%%%%%%%%%%%%%%%%%%%%%%%%%%%%%%%%%%%%%%%%%%%%%%%%%%%%%%%%%

Here the following natural question arises: ``are any pair of   dual Baer modules which are epimorphic images of each other,  isomorphic?" We end the paper with giving an example showing that the answer of this question is negative.
The following  example can be also  compared with Examples \ref{1.1} and \ref{Baer.subisomorphic}.

%%%%%%%%%%%%%%%%%%%%%%%%%%%%%%%%%%%%%%%%%%%%%%%%%%%%%%%%%%%%%%%%%%%%%%%%%%%%%%%%%%%%%%%%%%%%%%%%%%%%%%%%

\begin{exam}\label{3}
We show that two dual Baer modules which are epimorphic images of each other are not isomorphic in general.
 To see it, first we note that a ring $R$ is right hereditary right Noetherian if and only if every injective  $R$-module is dual Baer \cite[Theorem 2.3]{dual baer tutu}. Now let $X=\Bbb{Q}^{\Bbb{N}}$ and $Y=\Bbb{Q}^{\Bbb{N}}\oplus \Bbb{Z}_{p^{\infty}}$ where $p$ is a prime integer number. By the above note $X$ and $Y$ are dual Baer $\Bbb{Z}$-modules. Clearly there are epimorphisms $X\rightarrow Y$ and $Y\rightarrow X$while $X$ is not isomorphic to $Y$.
\end{exam}

%%%%%%%%%%%%%%%%%%%%%%%%%%%%%%%%%%%%%%%%%%%%%%%%%%%%%%%%%%%%%%%%%%%%%%%%%%%%%%%%%%%%%%%%%%%%%%%%%%%%%%%%%%%%%%%
%%%%%%%%%%%%%%%%%%%%%%%%%%%%%%%%%%%%%%%%%%%%%%%%%%%%%%%%%%%%%%%%%%%%%%%%%%%%%%%%%%%%%%%%%%%%%%%%%%%%%%%%
%%%%%%%%%%%%%%%%%%%%%%%%%%%%%%%%%%%%%%%%%%%%%%%%%%%%%%%%%%%%%%%%%%%%%%%%%%%%%%%%%%%%%%%%%%%%%%%%%%%%%%%%%%%%%%%

{\bf Acknowledgments.}
%The authors are very thankful to the referee for a thorough and prompt report as well as a number of useful comments and suggestions. The paper's presentation has been nicely improved by his/her suggestions.
The authors express their gratitude to Mathematics Research Institute, the College of Arts and Sciences, the Office of International Affairs, OSU-Columbus and OSU-Lima, for their partial support of this research work.

%%%%%%%%%%%%%%%%%%%%%%%%%%%%%%%%%%%%%%%%%%%%%%%%%%%%%%%%%%%%%%%%%%%%%%%%%%%%%%%%%%%%%%%%%%%%%%%%%%%%
%%%%%%%%%%%%%%%%%%%%%%%%%%%%%%%%%%%%%%%%%%%%%%%%%%%%%%%%%%%%%%%%%%%%%%%%%%%%%%%%%%%%%%%%%%%%%%%%%%%%%%%
%%%%%%%%%%%%%%%%%%%%%%%%%%%%%%%%%%%%%%%%%%%%%%%%%%%%%%%%%%%%%%%%%%%%%%%%%%%%%%%%%%%%%%%%%%%%%%%%%%%%
%%%%%%%%%%%%%%%%%%%%%%%%%%%%%%%%%%%%%%%%%%%%%%%%%%%%%%%%%%%%%%%%%%%%%%%%%%%%%%%%%%%%%%%%%%%%%%%%%%%%%%%
%%%%%%%%%%%%%%%%%%%%%%%%%%%%%%%%%%%%%%%%%%%%%%%%%%%%%%%%%%%%%%%%%%%%%%%%%%%%%%%%%%%%%%%%%%%%%%%%%%%%
%%%%%%%%%%%%%%%%%%%%%%%%%%%%%%%%%%%%%%%%%%%%%%%%%%%%%%%%%%%%%%%%%%%%%%%%%%%%%%%%%%%%%%%%%%%%%%%%%%%%%%%
%%%%%%%%%%%%%%%%%%%%%%%%%%%%%%%%%%%%%%%%%%%%%%%%%%%%%%%%%%%%%%%%%%%%%%%%%%%%%%%%%%%%%%%%%%%%%%%%%%%%
%%%%%%%%%%%%%%%%%%%%%%%%%%%%%%%%%%%%%%%%%%%%%%%%%%%%%%%%%%%%%%%%%%%%%%%%%%%%%%%%%%%%%%%%%%%%%%%%%%%%%%%

\end{document}